% ****** Start of file apssamp.tex ******
%
%   This file is part of the APS files in the REVTeX 4.2 distribution.
%   Version 4.2a of REVTeX, December 2014
%
%   Copyright (c) 2014 The American Physical Society.
%
%   See the REVTeX 4 README file for restrictions and more information.
%
% TeX'ing this file requires that you have AMS-LaTeX 2.0 installed
% as well as the rest of the prerequisites for REVTeX 4.2
%
% See the REVTeX 4 README file
% It also requires running BibTeX. The commands are as follows:
%
%  1)  latex apssamp.tex
%  2)  bibtex apssamp
%  3)  latex apssamp.tex
%  4)  latex apssamp.tex
%
\documentclass[%
 reprint,
%superscriptaddress,
%groupedaddress,
%unsortedaddress,
%runinaddress,
%frontmatterverbose, 
%preprint,
%preprintnumbers,
%nofootinbib,
%nobibnotes,
%bibnotes,
 ,amsmath
 ,amssymb,
 aps,
 onecolumn,
%pra,
%prb,
%rmp,
%prstab,
%prstper,
%floatfix,
]{revtex4-2}
\usepackage{graphicx}% Include figure files
\usepackage{dcolumn}% Align table columns on decimal point
\usepackage{bm}% bold math

%\usepackage{hyperref}% add hypertext capabilities
%\usepackage[mathlines]{lineno}% Enable numbering of text and display math
%\linenumbers\relax % Commence numbering lines

%\usepackage[showframe,%Uncomment any one of the following lines to test 
%%scale=0.7, marginratio={1:1, 2:3}, ignoreall,% default settings
%%text={7in,10in},centering,
%%margin=1.5in,
%%total={6.5in,8.75in}, top=1.2in, left=0.9in, includefoot,
%%height=10in,a5paper,hmargin={3cm,0.8in},
%]{geometry}

\newcommand{\Tr}{\mathop{ \rm tr}}
\newcommand{\tr}{\mathop{ \rm tr}}
\newcommand{\sign}{\mathop{ \rm sign}}
\newenvironment{proof}{\par\noindent{\bf Proof\ }}{}
\newtheorem{theorem}{Theorem}
\newtheorem{proposition}{Proposition}

\newtheorem{definition}{Definition}

\def \H{{\mathbb H}}
\def \X{{\mathcal X}}

\def \cF{{\mathcal F}}
\def \cK{{\mathcal K}}
\def \cX{{\mathcal X}}
\def \cP{{\mathcal P}}

\def \bR{{\mathbb R}}
\def \bZ{{\mathbb Z}}
\def \ds{\displaystyle}
\DeclareSymbolFont{usualmathcal}{OMS}{cmsy}{m}{n}
\DeclareSymbolFontAlphabet{\mathcal}{usualmathcal}

\newcommand{\norm}[1]{\left\lVert#1\right\rVert}
\DeclareMathOperator{\EX}{\mathbb{E}}% expected value
\newtheorem{prop}{Proposition}
\begin{document}

\preprint{APS/123-QED}

\title{Theory and applications of the Sum-Of-Squares technique}% Force line breaks with \\
%\thanks{A footnote to the article title}%

\author{Francis Bach}
 \affiliation{Inria, Ecole Normale Supérieure
PSL Research University, Paris, France}%Lines break automatically or can be forced with \\
\author{Elisabetta Cornacchia}%
\affiliation{École Polytechnique Fédérale de Lausanne (EPFL), Mathematical Data Science laboratory, CH-1015 Lausanne, Switzerland
%Authors' institution and/or address\\
 %This line break forced with \textbackslash\textbackslash
}%

\author{Luca Pesce}
\affiliation{École Polytechnique Fédérale de Lausanne (EPFL), Information, Learning and Physics laboratory, CH-1015 Lausanne, Switzerland
}%
\author{Giovanni Piccioli}
\affiliation{%
École Polytechnique Fédérale de Lausanne (EPFL), Statistical
Physics of Computation laboratory, CH-1015 Lausanne, Switzerland}%
 \email{giovanni.piccioli@epfl.ch}

%\collaboration{CLEO Collaboration}%\noaffiliation

%\date{\today}% It is always \today, today,
             %  but any date may be explicitly specified

\begin{abstract}
The Sum-of-Squares (SOS) approximation method is a technique used in optimization problems to derive lower bounds on the optimal value of an objective function. By representing the objective function as a sum of squares in a feature space, the SOS method transforms non-convex global optimization problems into solvable semidefinite programs. This note presents an overview of the SOS method. We start with its application in finite-dimensional feature spaces and, subsequently, we extend it to infinite-dimensional feature spaces using reproducing kernels (k-SOS). Additionally, we highlight the utilization of SOS for estimating some relevant quantities in information theory, including the log-partition function.
% The Sum of Squares (SOS) approximation method is a technique for finding a lower bound to the optimal value of an objective function in a minimization problem. By approximating the function with a sum of squares in a feature space, the original non-convex global optimization problem is transformed into an efficiently solvable semidefinite program. In this framework, we first present finite-dimensional feature space approximations, then extend it to infinite-dimensional feature spaces using kernels (k-SOS). Finally, using SOS, we consider the estimation of various relevant quantities in information theory and reinforcement learning.
\end{abstract}

%\keywords{Suggested keywords}%Use showkeys class option if keyword
                              %display desired
\maketitle

%\tableofcontents
\section{Lecture 1}
\subsection{All problems are convex!}
Let us consider a global optimization problem where we want to minimize a function $h: \X \to \mathbb{R}$. At first we only look for the infimum $h(\hat{x})$, and not where it is attained. Moreover, we are interested in considering general assumptions on $h(x)$, in particular we do not want to put constraints  on the convexity of the objective. However, a very important fact we exploit in the following, is that one can always rephrase a non-convex problem into a convex one (see Fig.~\ref{fig:apac} for intuition):
\begin{figure}[t]
\centering
     \includegraphics[width= 0.6 \textwidth]{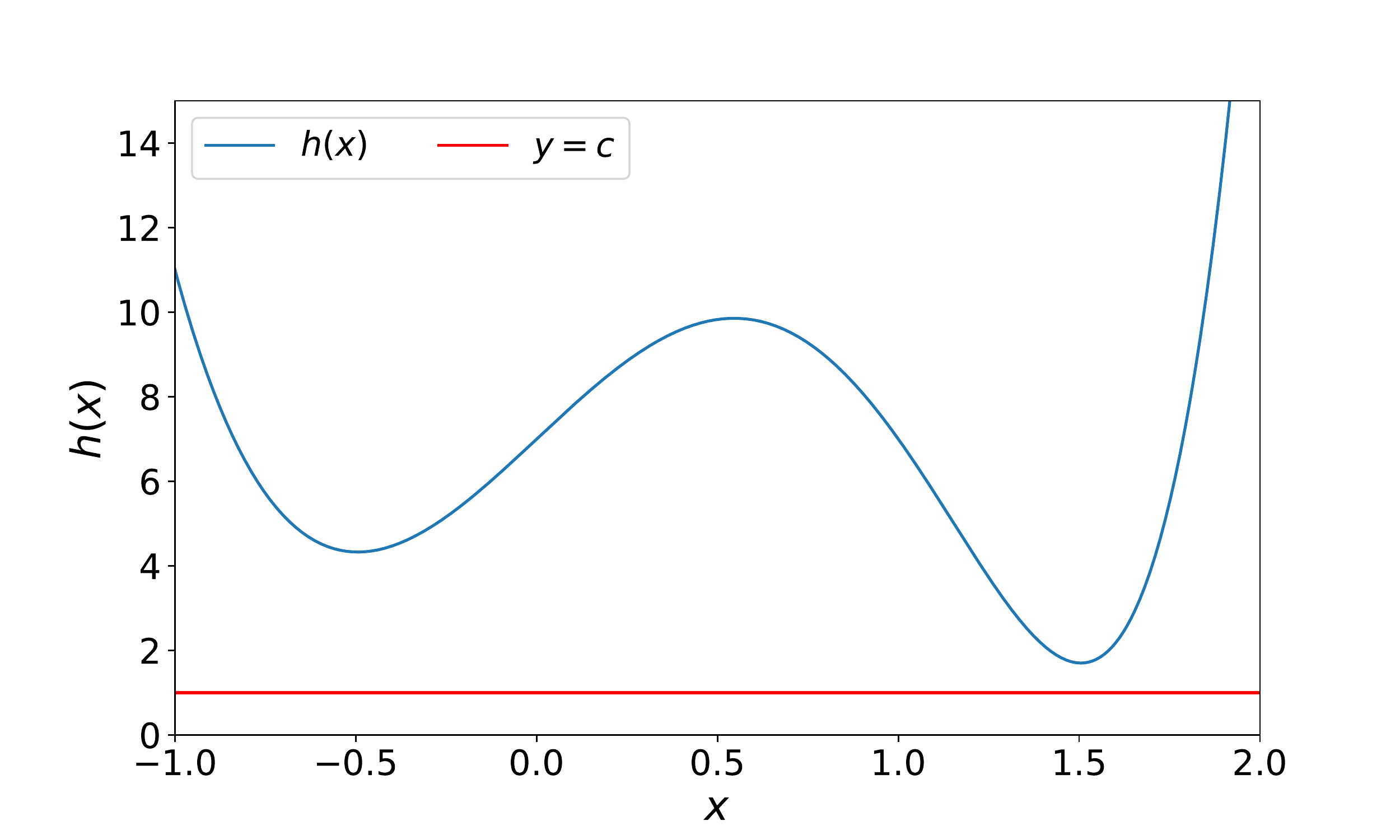}
     
      \vspace*{-.2cm}
      
      \caption{All problems are convex. In order to find the global minimum of $h(x)$ one needs to find the greatest $c$ which satisfy $h(x) - c \geq 0, \,\, \forall x \in \X$.}
\label{fig:apac}
\end{figure}
\begin{align}
\inf_{x \in \X} h(x)  = \sup_{c \in \mathbb{R}} \ c  \,\,\mbox{ such that } \forall x \in \X, \,\, h(x) - c \geqslant 0.  \,\,\,
\label{eq:glob_opt}
\end{align}

Indeed the right hand side is clearly a convex problem, i.e., minimization over a convex set plus linear constraints in $c$, while on the left hand side we have a non-convex one in the most general setting. The catch is that we are imposing constraints over a dense set (if we do not make any assumption on $\X$)! After this mapping, the main technical challenge becomes dealing with non-negativity constraints over the set $\X$, i.e., $h(x)-c \geq 0,  \, \forall x \in \X$. A crucial point that we must learn from the equivalence between the two problems above is that we need a computationally efficient (essentially linear) way to manipulate non-negative functions.

Convex duality leads to another natural formulation:
\begin{align}
\inf_{x \in \X} h(x) & = \sup_{c \in \mathbb{R}} \ c \mbox{ such that } \forall x \in \X, h(x) - c \geqslant 0 \\
& =  \sup_{c \in \mathbb{R}} \inf_{ \mathcal{P}^{< \infty}_{+}(\mathcal{X})}  \left \{ c + \int_\X (  h(x) - c) d\mu(x) \right \} \label{eq:step2}
\\
& =   \inf_{\mu  \in \mathcal{P}^{< \infty}_{+}(\mathcal{X}) } \sup_{c \in \mathbb{R}} \left \{ c +  \int_\X (  h(x) -c ) d\mu(x) \right \} \label{eq:step3}\\
& =   \inf_{ \mu  \in \mathcal{P}^{< \infty}_{+}(\mathcal{X})}  \left\{ \int_\X h(x) d\mu(x) \right \} \mbox{ such that } \int_\X d\mu(x) = 1,
\label{eq:step4}
\end{align}
\noindent where we denoted the space of finite positive measures on $\mathcal{X}$ as $\mathcal{P}^{< \infty}_{+}(\mathcal{X})$. We rewrite the global optimization problem in eq.~\eqref{eq:glob_opt} thanks to the introduction of the Lagrangian in eq.~\eqref{eq:step2}; due to the positiveness of the constraint the measure must be positive and we take the infimum over $\mathcal{P}^{< \infty}_{+}(\mathcal{X})$. By exploiting the convexity of the problem (strong duality)
we can invert $\inf$ and $\sup$ in eq.~\eqref{eq:step2} to obtain the dual formulation of the optimization problem in eq.~\eqref{eq:step3}. The interpretation of the measure for the optimization problem is given in Fig.~\ref{fig:measure}. By solving the dual problem we can access both the minimum $h(\hat{x})$ and the minimizer $\hat{x}$ thanks to the knowledge of the measure attaining the infimum in eq.~\eqref{eq:step4}. %\bl{GP: I think ~\eqref{eq:step3} is the dual problem and that the equality with ~\eqref{eq:step2} holds because of convexity (strong duality).}\textcolor{orange}{LP: Yes I think you are right, I thought the dual was the lagrangian formulation. Better now?}
\begin{figure}
\centering
     \includegraphics[width= 0.6 \textwidth]{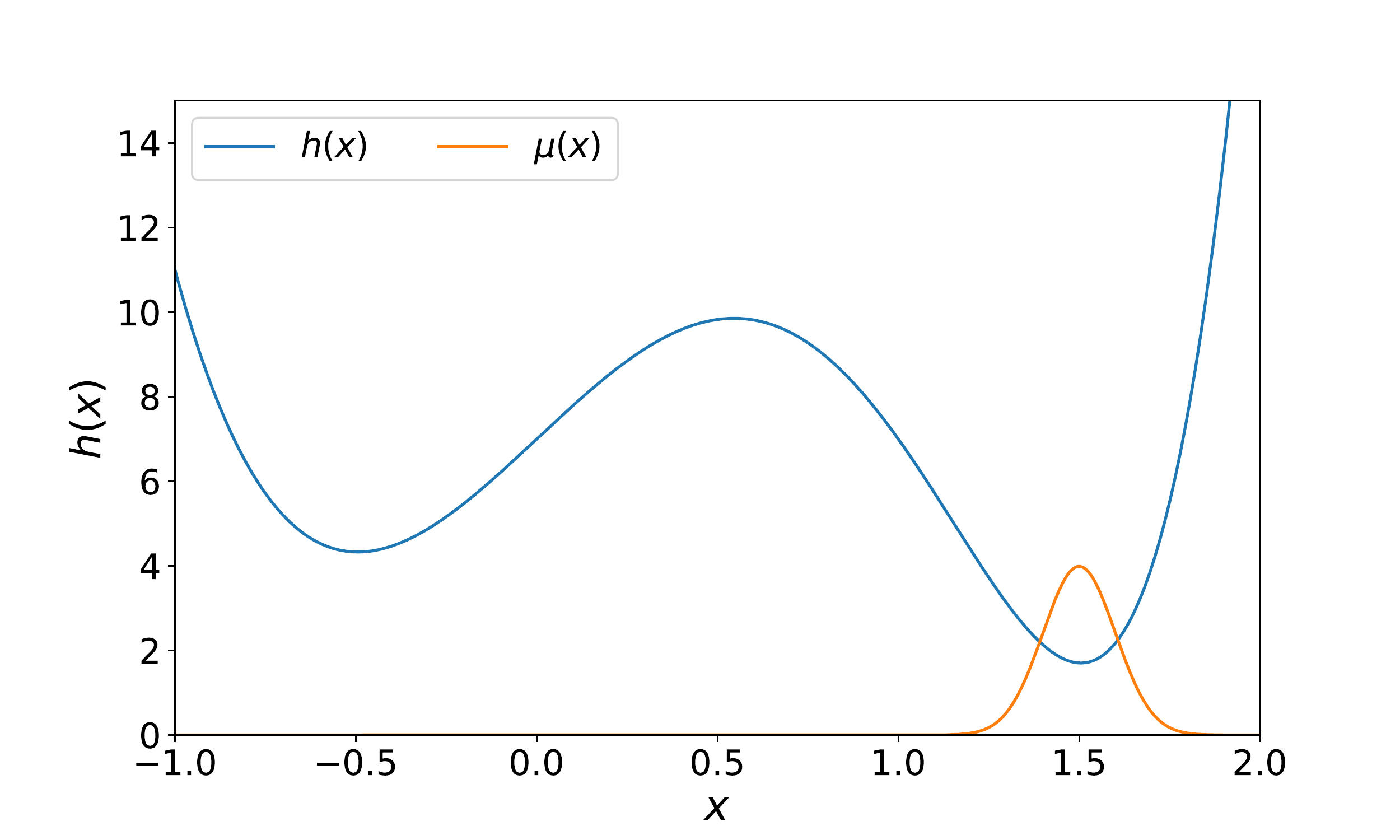}
    \vspace*{-.2cm}
    
      \caption{Cartoon plot of the positive measure $\mu(x)$ which peaks around the minimizer $\hat{x}$. The measure $\mu(x)$ that attains the $\inf$ in eq.~\eqref{eq:step4} is $\delta_{\hat x}$.
}
\label{fig:measure}
\end{figure}

\subsection{Sum-of-squares representation of non-negative functions}

We need a computationally efficient (essentially linear) way to manipulate non-negative functions. In this section we introduce one idea that goes in this direction, originally introduced by \cite{Lasserre_original,parrilo2000structured}: represent non-negative functions as ``sums of squares'' (SOS).
Let us consider a feature map $\varphi: \X \to \mathbb{C}^d$, the goal is to represent functions as quadratic forms:
\begin{align}
    h(x) = \varphi(x)^\ast H \varphi(x),
    \label{eq:quad_form}
\end{align}
\noindent where $H \in \H_d$, i.e., set of Hermitian matrices in $\mathbb{C}^{d \times d}$. Remember that we want to deal with a real objective function $h$ and a sufficient condition in order to achieve that is to assume $H$ to be Hermitian.
Many functions admit a representation of this form:
\begin{itemize}
    \item Polynomials: $\varphi(x) = (1,x,x^2, \dots )$.
    \item Trigonometric polynomials: $\varphi(x) = (e^{i \pi x},e^{-i \pi x}, e^{i 2 \pi x}, e^{-i 2 \pi x}, \dots )$.
\end{itemize}
It is important to stress that the representation of $h$ associated with $H$ is not unique. Consider the linear span of the set $\{ \varphi (x)\varphi(x)^\ast\,:\, x\in \X\}$, and denote it as $\mathcal{V}$.
If in eq.~\eqref{eq:quad_form} we substitute $H$ with $H + H^{\prime}$, where $H^{\prime} \in \mathcal{V}^\perp$\footnote{The orthogonality is with respect to the following dot product in matrix space $\langle A,B\rangle=\tr[A^\ast B]$.}, the left hand side would be left unchanged.

A key assumption we make is that the constant function can be represented through a certain (non-unique) $U \in \H_d$:
\begin{align}
     \varphi(x)^\ast U \varphi(x) = 1, \,\,\, \forall x \in \X.
     \label{eq:repr_const}
\end{align}

We call a function a Sum of Squares (SOS) if it can be expanded in the following form:
\begin{align}
    h(x) = \sum_{i \in \mathcal{I}} g_i(x)^2,
\end{align}
\noindent where we introduced the generic set of functions $\{g_i: \X \to \mathbb{R} \}_{i \in \mathcal{I}}$.  

We can connect this definition with the representation as a quadratic form in eq.~\eqref{eq:quad_form} by characterizing the SOS functions using the following proposition: 
\begin{prop}
The objective function $h$, represented as $h(x) = \varphi(x)^\ast H \varphi(x)$, is a SOS if and only if $H \succcurlyeq 0$ and $H \in \H_d$.
\end{prop}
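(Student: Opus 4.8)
The plan is to prove the two implications separately, keeping in mind that because the representation $h(x)=\varphi(x)^\ast H\varphi(x)$ is not unique, the natural reading of the statement is that $h$ is a SOS if and only if it admits \emph{some} representative $H\succcurlyeq 0$ in $\H_d$, i.e.\ a positive semidefinite matrix in the equivalence class $H+\mathcal{V}^\perp$.

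For the direction $H\succcurlyeq 0 \Rightarrow h$ is a SOS, I would start from the spectral decomposition of the Hermitian positive semidefinite matrix, $H=\sum_k \lambda_k\, u_k u_k^\ast$ with $\lambda_k\geq 0$, so that
\begin{align}
h(x)=\varphi(x)^\ast H\varphi(x)=\sum_k \lambda_k\,\bigl|u_k^\ast\varphi(x)\bigr|^2 .
\end{align}
Writing $u_k^\ast\varphi(x)=p_k(x)+i\,q_k(x)$ with $p_k,q_k:\X\to\mathbb{R}$, each modulus squared splits as $|u_k^\ast\varphi(x)|^2=p_k(x)^2+q_k(x)^2$, and absorbing $\sqrt{\lambda_k}\geq 0$ into the functions exhibits $h=\sum_k (\sqrt{\lambda_k}\,p_k)^2+(\sqrt{\lambda_k}\,q_k)^2$ as a genuine sum of real squares. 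Equivalently, one may take any factorization $H=B^\ast B$ (for instance $B=H^{1/2}$) and expand $\|B\varphi(x)\|^2$ componentwise into real and imaginary parts.

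For the converse, I would assume the squared summands are real-valued elements of the linear span of the feature components, $g_i(x)=b_i^\ast\varphi(x)$ with $b_i\in\mathbb{C}^d$; reality of $g_i$ gives $g_i(x)=\overline{g_i(x)}=\varphi(x)^\ast b_i$, hence
\begin{align}
g_i(x)^2=\bigl(\varphi(x)^\ast b_i\bigr)\bigl(b_i^\ast\varphi(x)\bigr)=\varphi(x)^\ast\bigl(b_i b_i^\ast\bigr)\varphi(x).
\end{align}
Summing over $i$ yields $h(x)=\varphi(x)^\ast H'\varphi(x)$ with $H'=\sum_i b_i b_i^\ast$, which is positive semidefinite as a sum of rank-one PSD matrices; since $\varphi^\ast H'\varphi=\varphi^\ast H\varphi$ one has $H-H'\in\mathcal{V}^\perp$, so $H'$ is exactly the desired PSD representative of $h$ in the same equivalence class as $H$.

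The step I expect to be the main obstacle is this converse, specifically justifying that the functions $g_i$ really do lie in the span of $\varphi$, so that the coefficient vectors $b_i$ and the rank-one matrices $b_i b_i^\ast$ exist. Without this restriction the notion of SOS would collapse to mere nonnegativity (any $h\geq 0$ would be $(\sqrt{h})^2$), and the equivalence with $H\succcurlyeq 0$ would fail, since a nonnegative $h$ need not have a PSD representative in a fixed finite feature space. Thus the role of the feature map, and the bookkeeping of non-uniqueness through the quotient by $\mathcal{V}^\perp$, are the delicate points that must be handled with care.
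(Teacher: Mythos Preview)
Your forward direction via spectral decomposition is exactly the paper's argument; the paper simply writes $h(x)=\sum_i\big[\sqrt{\lambda_i}\,u_i^\ast\varphi(x)\big]^2$ and stops, without your extra care of splitting $u_i^\ast\varphi(x)$ into real and imaginary parts (so your version is actually cleaner, since that inner product need not be real). The paper does not spell out the converse direction at all, nor does it address the non-uniqueness caveat via $\mathcal{V}^\perp$ that you correctly flag; your handling of both points is sound and strictly more complete than what the text provides.
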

The statement is easily understood by doing the spectral decomposition of the matrix $H$. Indeed let $H = \sum_{i \in I} \lambda_i u_i u_i^\ast$, then we can massage eq.~\eqref{eq:quad_form} to obtain:
\begin{align}
h(x) = \sum_{i \in I } \big[ \sqrt{\lambda_i} u_i^\ast \varphi(x) \big]^2,
\end{align}
\noindent which is a ``sum-of-squares'' (SOS). It is interesting to note that the number of squares in the SOS decomposition will be equal to the rank of the matrix $H$. 

A rather simple statement which is pivotal for the following discussion is:
\begin{prop}
    If the objective function $h$ is a SOS, then $h$ is non-negative.
    \label{prop:1}
\end{prop}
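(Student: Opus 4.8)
The plan is to prove the statement directly from the definition of a sum of squares, keeping the spectral characterization of the preceding proposition as an alternative route. By assumption $h$ is a SOS, so there exist real-valued functions $\{g_i : \X \to \mathbb{R}\}_{i \in \mathcal{I}}$ with $h(x) = \sum_{i \in \mathcal{I}} g_i(x)^2$. First I would fix an arbitrary $x \in \X$ and observe that each summand $g_i(x)^2$ is the square of a \emph{real} number, hence $g_i(x)^2 \geq 0$. A sum of non-negative terms is non-negative, so $h(x) \geq 0$; since $x$ was arbitrary, $h \geq 0$ on all of $\X$.

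Alternatively, I could route the argument through the characterization just established: since $h$ is a SOS, that proposition gives $H \in \H_d$ with $H \succcurlyeq 0$. Fixing $x$ and setting $v = \varphi(x) \in \mathbb{C}^d$, the value $h(x) = \varphi(x)^\ast H \varphi(x) = v^\ast H v$ is precisely the quadratic form of a positive semidefinite Hermitian matrix evaluated at $v$, which is non-negative by the definition of $H \succcurlyeq 0$. This again yields $h(x) \geq 0$ for every $x$.

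The only point deserving a moment of care is that the $g_i$ are assumed real-valued, which is exactly what makes each $g_i(x)^2 \geq 0$; were they complex one would instead write $|g_i(x)|^2$ and reach the same conclusion. If $\mathcal{I}$ is infinite, I would note that the partial sums form a non-decreasing sequence of non-negative reals, so the limit (or supremum) defining $h(x)$ is again non-negative, and no further analytic subtlety arises. I do not anticipate any genuine obstacle: the statement is essentially immediate, and its purpose is to supply the easy implication that validates the SOS relaxation as a lower bound in the reformulation of eq.~\eqref{eq:glob_opt}.
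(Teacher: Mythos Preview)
Your proposal is correct; the paper itself does not supply a proof for this proposition, treating it as self-evident (it is introduced as ``a rather simple statement''). Your direct argument from the definition---a sum of squares of real numbers is non-negative---is exactly the intended reasoning, and your alternative via $H \succcurlyeq 0$ is an equally valid and equally immediate route.
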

% \bl{(GP: but this is not the definition of SOS)} \textcolor{orange}{LP: I don't get the point, isn't this the definition we gave? It is indeed equivalent to a SOS in the basis, no?}\bl{GP: yes, it is equivalent but the flow I was used to was defining $f$ to be SOS if $f=\sum_i g_i^2$ and then say that every function of the form $\varphi H\varphi$ can be written in SOS form. But yeah, in the end it's the same thing, so feel free to ignore me. }

 The converse of prop.~\ref{prop:1} is not true and not all non-negative functions are SOS! We focus our attention in the following on understanding when the characterization is tight. 
 
However an important point is that, if we are given a representation of a function $h$ as a quadratic form in eq.~\eqref{eq:quad_form}, checking that $h$ is a SOS is a convex feasibility problem:
\begin{prop}
    $h(x)=\varphi(x)^\ast H \varphi(x)$ is a SOS if and only if it exists $H^{\prime} \in \mathcal{V}^\perp$ such that $H-H^{\prime} \succcurlyeq 0$.
    \label{prop:2}
\end{prop}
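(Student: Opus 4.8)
The plan is to reduce everything to a single bridge identity connecting the quadratic form to the matrix inner product, and then to invoke the characterization of SOS functions via positive semidefiniteness of a representing matrix (the proposition proved above by spectral decomposition). The governing idea is that the set of all matrices representing a fixed $h$ is an affine coset of $\mathcal{V}^\perp$, and ``$h$ is SOS'' simply asserts that this coset meets the positive semidefinite cone.

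First I would record the key identity: for any $M \in \mathbb{C}^{d\times d}$ and any $x \in \X$, since $\varphi(x)^\ast M \varphi(x)$ is a scalar and $\varphi(x)\varphi(x)^\ast$ is Hermitian, cyclicity of the trace gives
\[
\varphi(x)^\ast M \varphi(x) = \tr\!\big[ M\, \varphi(x)\varphi(x)^\ast \big] = \langle \varphi(x)\varphi(x)^\ast, M\rangle .
\]
Consequently the function $x \mapsto \varphi(x)^\ast M \varphi(x)$ vanishes identically on $\X$ if and only if $M$ is orthogonal to every generator $\varphi(x)\varphi(x)^\ast$, and by linearity of the inner product this is equivalent to $M \in \mathcal{V}^\perp$. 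In particular the matrices representing a fixed $h$ form the affine family $H + \mathcal{V}^\perp$, which is exactly the representation freedom noted earlier in the text.

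For the direction ($\Leftarrow$), suppose $H' \in \mathcal{V}^\perp$ satisfies $\tilde H := H - H' \succcurlyeq 0$. By the bridge identity $\varphi(x)^\ast H' \varphi(x) = 0$ for all $x$, so $\tilde H$ is another representing matrix of $h$; being Hermitian and positive semidefinite, it yields an SOS expansion by the spectral-decomposition proposition. For the converse ($\Rightarrow$), if $h$ is SOS then that same proposition furnishes some Hermitian $\tilde H \succcurlyeq 0$ with $\varphi(x)^\ast \tilde H \varphi(x) = h(x)$; setting $H' := H - \tilde H$, the fact that both $H$ and $\tilde H$ represent $h$ forces $\varphi(x)^\ast H' \varphi(x) \equiv 0$, hence $H' \in \mathcal{V}^\perp$ by the bridge identity, while $H - H' = \tilde H \succcurlyeq 0$ as required.

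The argument is essentially bookkeeping, and the one step that deserves care is the bridge identity itself: one must track the conjugate transpose in $\langle A,B\rangle = \tr[A^\ast B]$ and use Hermiticity of the rank-one matrices $\varphi(x)\varphi(x)^\ast$ to identify $\varphi(x)^\ast M \varphi(x)$ with $\langle \varphi(x)\varphi(x)^\ast, M\rangle$, then pass from orthogonality against each generator to orthogonality against all of $\mathcal{V}$. The conceptually important point, which is what makes the statement nontrivial relative to the spectral-decomposition proposition, is that the given $H$ need not itself be positive semidefinite: SOS-ness asserts only that the admissible family $H + \mathcal{V}^\perp$ intersects the positive semidefinite cone, and detecting this intersection is exactly a convex (semidefinite) feasibility problem.
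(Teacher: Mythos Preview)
Your proof is correct and is precisely the argument the paper gestures at but never writes out: in the paper, Proposition~2 is stated without proof, relying implicitly on the earlier remark that the representing matrix is unique only up to $\mathcal{V}^\perp$ together with the spectral-decomposition proposition. Your bridge identity and the coset description $H+\mathcal{V}^\perp$ make this explicit, and both directions follow cleanly from the first proposition as you indicate.
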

Indeed if the space $\mathcal{V}$ is known, and for most applications we will study it will be, the problem in prop.~\ref{prop:2} is a well-defined SDP. 

From the discussion above we understand that the linear span $\mathcal{V}$ plays a key role in the SOS contruction. We can analyze its form in the simple examples we already introduced:
\begin{itemize}
    \item Polynomials:  The space $\mathcal{V}$ is the set of Hankel matrices, indeed  $\left[\varphi(x)\varphi(x)^{\ast}\right]_{i,j} = x^{(i+j)}$.
    \item Trigonometric polynomials: The space $\mathcal{V}$ is the set of Toeplitz matrices, indeed  $\left[\varphi(x)\varphi(x)^{\ast}\right]_{\omega,\omega'} = e^{i \pi (\omega - \omega')x} $.
\end{itemize}

Using the propositions we introduced above we can reformulate the optimization problem as:
\begin{align}
\inf_{x \in \X} h(x) & = \sup_{c \in \mathbb{R}} \ c \mbox{ such that } \forall x \in \X, h(x) - c \geqslant 0  \\
\label{eq:sos_primal_relaxed}
& \geqslant  \sup_{c \in \mathbb{R}, \ A \succcurlyeq 0} \ c\  \mbox{ such that } \forall x \in \X,  h(x) - c = \varphi(x)^\ast A \varphi(x) \\
& =   \sup_{c \in \mathbb{R}, \ A \succcurlyeq 0} \ c \ \mbox{ such that } H - c U   - A \in \mathcal{V}^\perp
\label{eq:sos_view}
,\end{align}
\noindent where we used Prop.~\ref{prop:1} knowing that the SOS functions are included in the set of non-negative functions (hence the relaxation in the cost), and that the constant function can be represented by $U$. Finally we assumed that $h(x)=\varphi(x)H\varphi(x)^{\ast}$ (i.e.,  $h$ is SOS). The problem above is a \textit{semidefinite program}! Therefore we can solve it numerically on a computer.

The SOS relaxation modifies as well the dual problem:
\begin{align}
\inf_{x \in \X} h(x) & = \sup_{c \in \mathbb{R}} \ c \mbox{ such that } \forall x \in \X, h(x) - c \geqslant 0  \\
 &\geqslant  \sup_{c \in \mathbb{R}, \ A \succcurlyeq 0} \ c \ \mbox{ such that } \forall x \in \X,  h(x) - c = \varphi(x)^\ast A \varphi(x)
 \label{eq:step_dual1}\\
& = \sup_{c \in \mathbb{R}, \ A \succcurlyeq 0}  \inf_{ \mu \in \mathcal{P}^{<\infty} (\mathcal{X})}    c+\int_\X ( h(x) -c - \varphi(x)^\ast A \varphi(x) ) d\mu(x) \label{eq:step_dual2}
\\
& =   \inf_{  \mu \in \mathcal{P}^{<\infty} (\mathcal{X})} \sup_{c \in \mathbb{R}, \ A \succcurlyeq 0}   c+\int_\X ( h(x) -c - \varphi(x)^\ast A \varphi(x) ) d\mu(x) \label{eq:step_dual3} \\
& =   \inf_{  \mu \in \mathcal{P}^{<\infty} (\mathcal{X})}   \int_\X h(x) d\mu(x) \mbox{ such that } \int_\X d\mu(x) = 1 \mbox{ and }
\int_\X \varphi(x)\varphi(x)^\ast d\mu(x) \succcurlyeq 0.
\label{eq:moment_sos}
\end{align}

We write the Lagrangian in eq.~\eqref{eq:step_dual2}, and we exploit strong duality in eq.~\eqref{eq:step_dual3} to exchange inf and sup. While the constraint imposed by $c$ yields again simply the normalization of the measure as in eq.~\eqref{eq:step4}, a maximization over $A$ is more elaborated. 
The optimization concerning $A$ is $\sup_{ A \succcurlyeq 0} -\int_\X \varphi(x)^\ast A \varphi(x)  d\mu(x) =-\inf_{A\succcurlyeq 0}  \int_\X \varphi(x)^\ast A \varphi(x)  d\mu(x)= -\inf_{A\succcurlyeq 0} \tr[AB]$, with $B=\int_\X \varphi(x)\varphi(x)^\ast$. The solution is
\begin{equation}
    -\inf_{A\succcurlyeq 0} \tr[AB]=\begin{cases} 0 &\text{ if  $B\succcurlyeq 0$}\\
+\infty &\text{ otherwise }
\end{cases}
\end{equation}
in fact if $B\succcurlyeq 0$ then $\tr[AB]\geq 0$ for all $A\succcurlyeq 0$, hence a minimizer is $A=0$. Suppose instead that $B$ has a negative eigenvalue with eigenvector $v_-$, then picking $A=\lambda v_-v_-^T$ one can make $\tr[AB]$ arbitrarily negative by increasing $\lambda$.
% \bl{GP: qui ho cambiato la tua spiegazione perché penso non funzionasse benissimo (veniva il vincolo $\int_\X \varphi(x)^\ast \varphi(x)  d\mu(x)=0$) e penso di aver trovato quella giusta. Controlla tu se va bene.}
%exploiting that every scalar is equal to its trace we rewrite $\int_\X  \varphi(x)^\ast A \varphi(x) d\mu(x)$ as $\tr \left[\int_\X  \varphi(x)^\ast A \varphi(x) d\mu(x)\right]$, then we use the invariance under cyclic permutation of the trace to rearrange it as $\tr\left[A\int_\X (\varphi(x)\varphi(x)^\ast ) d\mu(x)\right]$, which can be easily differentiated with respect to $A$. The associated constraint tells us that we do not need to require anymore $\mu$ to be a finite positive measure on $\X$ as in eq.~\eqref{eq:step4}, instead we only need to have positive moments, i.e. positiveness of the covariance matrix 

Assuming the objective admits a representation as in  eq.~\eqref{eq:quad_form} $h(x) = \varphi(x)^\ast H \varphi(x)$, a simple reformulation of the problem in eq.~\eqref{eq:glob_opt} is:
\begin{align}
\inf_{x \in \X} h(x) &= \inf_{x \in \X}
\tr \Big[ H  \varphi(x)\varphi(x)^\ast \Big]
 \\
& =  \inf_{ \Sigma \in \mathcal{K} }\  \tr [ \Sigma H ],
\label{eq:convex_hull}
\end{align}
\noindent where we exploited the linearity of the problem in order to optimize over $\mathcal{K}$, defined as the closure of the convex hull of $\varphi(x)\varphi(x)^\ast$, indeed whenever we maximize linear objective over a convex hull we end up in one extremal point. An illustration of the basic geometry involved is given in Fig.~\ref{fig:hull}. By looking at eq.~\eqref{eq:convex_hull} we see that the relaxation in eq.~\eqref{eq:moment_sos} is equivalent to find upper-bounds of $\mathcal{K}$ as
$\widehat{\mathcal{K}} =
\big\{  \Sigma \in \mathcal{V}, \ \tr[ U\Sigma ] =1, \ \Sigma \succcurlyeq 0 \big\}$, with $\Sigma=\int_\X \varphi(x)\varphi(x)^\ast d\mu(x)$. Notice that $\mathcal{K}\subseteq\widehat{\mathcal{K}}$. Indeed, the final goal of optimization is to find tractable outer approximations of $\mathcal{K}$.

\begin{figure}
\centering
     \includegraphics[width= 0.6 \textwidth]{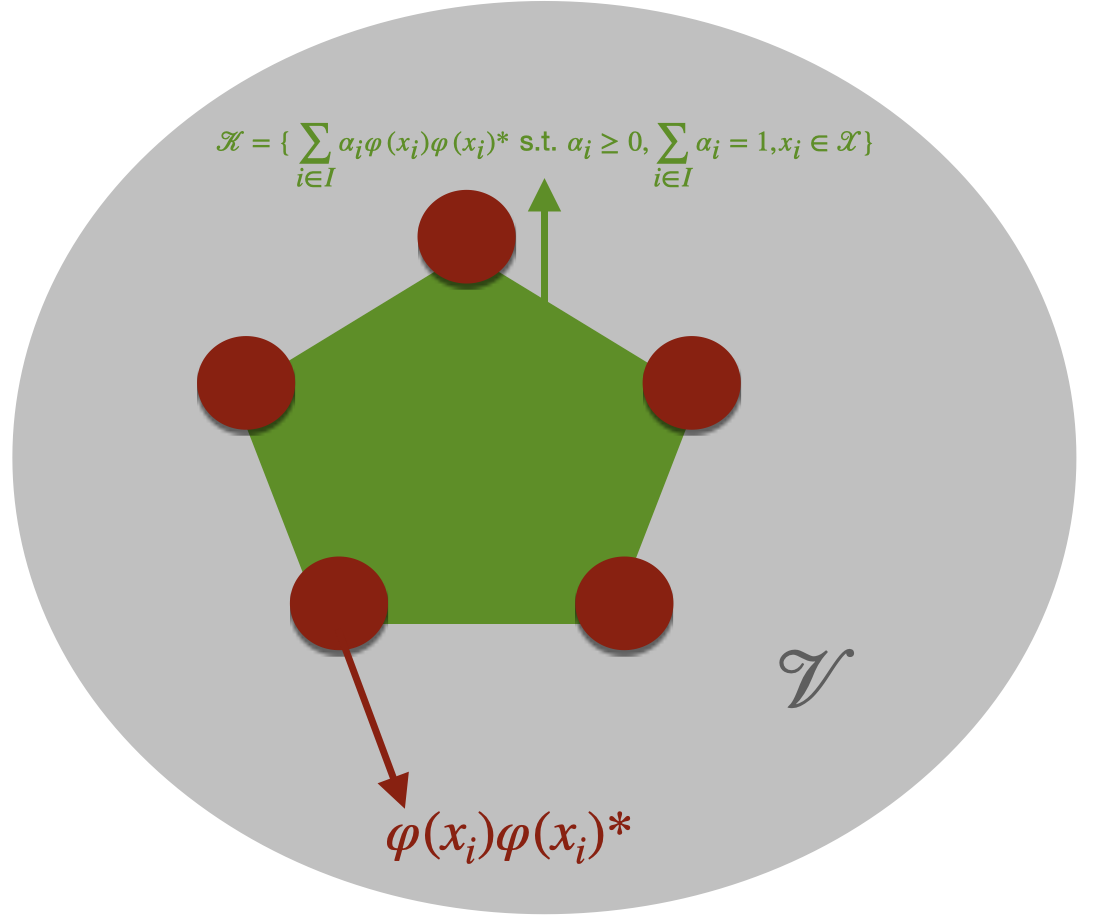}
      \caption{Cartoon plot of the convex hull in the linear span $\mathcal{V}$. }
\label{fig:hull}
\end{figure}

\subsection{Tightness of the approximation}
We saw in previous section that we can approach a SOS relaxation from two point of views:
\begin{itemize}
    \item \textit{Primal point of view}: Relax the non-negative constraint to be a SOS (eq.~\eqref{eq:sos_view}).
    \item \textit{Dual (moment) point of view}: Relax the strict assumption on the positive sign of the measure $\mu$, require only positiveness of the moments. It is equivalent to replace the convex hull $\mathcal{K}$ with the affine hull of $\varphi(x)\varphi(x)^\ast$ intersected with the SDP cone which we called $\hat{\mathcal{K}}$ (eq.~\eqref{eq:moment_sos}).
\end{itemize}
The two characterizations are completely equivalent. If $\mathcal{K} = \hat{\mathcal{K}}$ then the non-negative constraint can be expressed as a SOS, and viceversa. A natural question we investigate is: when is the SOS approximation tight? We analyze in the following a series of (simple) example in which we can give detail on the tightness of the approximation:
\begin{itemize}
\item
\textbf{Finite set with injective embedding:}
Let $\X$ be a finite set $\X = \{1,2,\dots,n\}$. Consider a \textit{one-hot encoding} mapping $\varphi(i) = e_i$, with $e_i$ the $i$-th element of the canonical basis in $\mathbb{R}^n$. The elements of $\mathcal{V}$ are given by diagonal matrices, as one can easily obtain from $\varphi(i)\varphi(i)^{\top} = {\rm diag}(e_i)$.  Moreover, the representation of the constant function is achieved by $U = \mathbb{I}_n$. Exploiting the moment point of view, one can easily check that the affine hull $\hat{\mathcal{K}}$ is equivalent to the convex hull $\mathcal{K}$, hence the SOS relaxation is tight. Proving that the approximation is tight using the primal formulation is less obvious. Let $K \in \mathbb{R}^{|\X| \times |\X|}$ the Gram matrix of all dot-products. If $K$ is invertible, then there exists $A \in \mathbb{R}^{|\X| \times d}$ such that $A \varphi$ sends all points to an element of the canonical basis of $\mathbb{R}^{|\X|}$.
We can use this property to expand $f: \X \to \mathbb{R}$ as a SOS: $\ds h(x) = \sum_{x' \in \X} 1_{x'=x} h(x') =  \sum_{x' \in \X} \big[\varphi(x)^\ast A^\ast A \varphi(x') \big]^2 h(x')$.
% Option 2: after projection $  \varphi(x) \varphi(x)^\top $ is diagonal with only a single one in the diagonal, and the simplex is the convex hull of the canonical basis.

\item
\textbf{Quadratic functions in $\X = \mathbb{R}^{d}$:}
Let $\ds \varphi(x) = {1 \choose x} \in \mathbb{R}^{d+1}$ and consider  $\ds h(x) = \frac{1}{2}{1 \choose x} ^\top \bigg( \begin{array}{ll}\! \! c & b^\top\!\! \\ \!\! b & A\!\!
\end{array} \bigg)  {1 \choose x} $. The goal is to understand if $h$ non-negative is equivalent to be a SOS, we use the primal point of view. One can easily show that we need $A \succcurlyeq 0$  otherwise $\inf_{x \in \mathbb{R}^d}  h(x) = - \infty$. By differentiating $h$ one obtains: $\inf_{x \in \mathbb{R}^d}  h(x) = \frac{1}{2}(c-b^{\top}A^{-1}b)$. Hence $h$ is non-negative if the Schur complement is PSD. It follows from simple algebraic fact that $A$ being PSD jointly with the Schur complement being PSD guarantees the condition $\bigg( \begin{array}{ll}\! \! c & b^\top\!\! \\ \!\! b & A\!\!
\end{array} \bigg) \succcurlyeq 0$, therefore using Prop.~1 we can rewrite $h$ as SOS.
% $\bigg( \begin{array}{ll}\! \! c & b^\top\!\! \\ \!\! b & A\!\!
% \end{array} \bigg) \succcurlyeq 0$.

% Extends to the Euclidean sphere.

\item \textbf{Polynomials in $\mathbb{R}$.}
\label{poly1d_tightness} A polynomial $p$ of even degree in $\mathbb{R}$ is non-negative if and only if it is a sum of squares. One can show that by factor $p$ in term of its roots:
$$
p(x) = \alpha \prod_{i \in I} ( x - r_i)^{ m_j} \prod_{j \in J} \big[ (x-a_j)^2 + b_j^2 \big],
$$
it follows easily indeed that $\alpha$ must be positive, the multiplicity of the real roots $\{r_i\}_{i \in I}$ must be even (otherwise we would change sign once crossing them), and by expanding the complex-conjugate roots term we obtain a sum of $2^{|J|}$ squares.

\item \textbf{Order $r$ polynomials in dimension $d$:} All non-negative polynomials are SOS when $r=2$, or when $d=1$. Only other case is $d=2$, and $r=4$ (Hilbert, 1888).

A counter-example is given by Motzkin (1965) for $(d=2,r=6)$: $1+x_1^2 x_2^4 + x_1^4 x_2^2 - 3 x_1^2 x_2^2$.

\item
\textbf{Trigonometric polynomials on $[-1,1]$.}
We consider $\X = [-1,1]$ and $\varphi(x) \in \mathbb{C}^{2r+1}$, with $[\varphi(x)]_\omega = e^{ i\pi \omega x}$ for $\omega \in \{-r,\dots,r\}$. As we discussed before the outer product of the $\varphi$ is given by: $(\varphi(x)\varphi(x)^\ast)_{\omega \omega'} = e^{ i\pi (\omega - \omega') x}$, and thus $\mathcal{V}$ is the set of Hermitian Toeplitz matrices, and we can take $U = \frac{1}{d} I $. For trigonometric polynomials in one dimension non-negativity is equivalent to being SOS, enabled by Fejer–Riesz theorem \cite{Fejér1916} (see \cite{grenandertoeplitz} p. 20): any PSD Toeplitz matrix is of the form $\hat{p}(\omega-\omega')$ where $p$ is a positive measure.
\end{itemize}
\newpage
\section{Lecture 2}
\subsection{Introduction}
In the last lecture we introduced a method to optimize a function $h(x)$ under the condition that it can be expressed as $h(x)=\varphi^\ast(x)H\varphi(x)$, for a feature map $\varphi:\X\mapsto \mathbb{R}^p$.
Two equivalent relaxations can be made:
\begin{itemize}
    \item\textbf{Primal point of view:}
    \begin{align}
    \inf_{x \in \X} h(x) & = \sup_{c \in \mathbb{R}} \ c \mbox{ such that } \forall x \in \X, h(x) - c \geqslant 0  \\
    & \geqslant  \sup_{c \in \mathbb{R}, \ A \succcurlyeq 0} \ c\  \mbox{ such that } \forall x \in \X,  h(x) - c = \varphi(x)^\ast A \varphi(x),
    \label{eq:sos_view_lec2}
\end{align}
where we relaxed the constraint $h(x) - c \geqslant 0$ to $h(x) - c = \varphi(x)^\ast A \varphi(x)$.
\item\textbf{Dual point of view:}
The dual problem \eqref{eq:dual_prob} gives
\begin{align}
    \inf_{x \in \X} h(x)&= \inf_{\mu\in\mathcal{P}^1 (\mathcal{X})} \int_\X h(x)d\mu(x)=\inf_{\Sigma\in\mathcal{K}}\tr[H\Sigma]\geq \inf_{\Sigma\in\widehat{\mathcal{K}}}\tr[H\Sigma],
\end{align}
with $\mathcal{K}=\text{convex hull}\left(\{\varphi(x)\varphi(x)^\ast, \,x\in\X\}\right)$ and $\widehat{\mathcal{K}} =
\big\{  \Sigma \in \mathcal{V}, \ \tr[ U\Sigma ] =1, \ \Sigma \succcurlyeq 0 \big\}$.
\end{itemize}
This relaxation scheme is however affected by some limitations. First we need $h$ to be representable as $h(x)=\varphi^\ast(x)H\varphi(x)$. Moreover the inequalities are tight only for some classes of functions (e.g., polynomials in one dimension, trigonometric polynomials in $[-1,1]$). One way to fix this is to use hierarchies of feature spaces, i.e., using progressively more expressive feature spaces to approach tightness.
To demonstrate this method we start with a couple of examples.
\begin{itemize}
\item
\textbf{Trigonometric polynomials on $[-1,1]^d$}
Suppose $h(x)=\varphi(x)^\ast H\varphi(x)$ on $\X=[-1,1]^d$ with $\varphi(x)=e^{i\omega^T x}$ for $\omega\in\Omega\subset \mathbb{Z}^d$. $\Omega$ basically acts as a constraint on the spectrum of $h$. For this choice of $\varphi$ we have $$\mathcal{V}=\text{Span}\left(\{\varphi(x)\varphi(x)^\ast,\;x\in\X\}\right)=\text{Span}\left(\left\{B(x)\in \mathbb{C}^{|\Omega|\times|\Omega|}\,:\,B_{\omega\omega'}(x)=e^{i(\omega-\omega')^Tx},\; x\in\X\right\} \right).$$ This shows that $\mathcal{V}$ is defined by a set of linear constraints in the space of $|\Omega|\times|\Omega|$ Hermitian matrices. The relaxation \eqref{eq:sos_view} is not tight in general, however by embedding $\Omega$ in a larger set we can make the relaxation as tight as desired, at the price of using larger and larger embeddings.
Consider for example the hierarchy of sets $\Theta_r=\{\omega \in\mathbb{Z}^d\,:\, \norm{\omega}_\infty\leq r\}$. We define new features $\psi_r:\X \mapsto \mathbb{R}^{|\Theta_r|}$, analogously to $\varphi$. Suppose that for sufficiently large $r$, $\Omega\subseteq\Theta_r$. Then we can write $h$ in terms of the new features, i.e., $h(x)=\psi_r(x)^\ast H'\psi_r(x)$, with $H'$ a block matrix of the form
\begin{equation}
    H'=\left(\begin{array}{@{}c|c@{}}
  \begin{matrix}
 H
  \end{matrix}
  & 0\\
\hline
  0 & 0
\end{array}\right)
\end{equation}
in a basis in which the first $|\Omega|$ coordinates are the frequencies in $\Omega$ and the remaining $|\Theta_r|-|\Omega|$ are the frequencies in $\Theta_r/ \Omega$.

Increasing $r$ the relaxation of $h(x)-c\geq 0 $ to $h(x)-c=\psi_r(x)^\ast A\psi_r(x)$ becomes tighter since the feature space becomes more expressive. What is the price to pay? Increasing the number of features means that the dimensionality of the semidefinite problem \eqref{eq:sos_view} increases as well, so it becomes more expensive to solve computationally. In the current example $|\Theta_r|=r^d$.

\item \textbf{Boolean hypercube} Let $\X=\{-1,1\}^d$. Consider the set indexed features $\varphi_A(x)=\prod_{i\in A}x_i$.
Recall that each function $f:\X\mapsto \mathbb{R}$ can be uniquely written as $f(x)=\sum_{A\subseteq[d]} \hat f(A) \varphi_A(x)$; this is just the Fourier transform for Boolean functions. We call $|A|$ the order of $\varphi_A$. Notice moreover that $\varphi_A(x)\varphi_B(x)=\varphi_{A\triangle B}(x)$, where $\triangle$ is the symmetric difference operator.

Consider $\mathcal{A}\subseteq \mathfrak{P}([d])$\footnote{we indicate with $\mathfrak{P}(\cdot)$ the power set.}, a collection of sets in $[d]$. Then $\mathcal{V}=\text{Span}\left(\{\varphi(x)\varphi(x)^\ast,\;x\in\X\}\right)=\text{Span}(\{M(x)\,:\;$ $M_{AB}=\varphi_{A\triangle B}(x),\; x\in\X \})$, which is defined by a set of linear constraints in the space of $|\mathcal{A}|\times|\mathcal{A}|$ matrices. One can then introduce hierarchies based of the order of the features, for example consider the set $\Theta_r=\left\{A\in\mathfrak{P}([d])\;:\; |A|<r\right\}$. Increasing $r$ the relaxation becomes more and more tight.
\item \textbf{General polynomial case (Putinar's Positivstellensatz \cite{putinar1993positive}):}
\begin{theorem}
Consider $K =\{ x \in \mathbb{R}^d\;:\;  \forall j \in [m] , g_j(x) \geq 0\}$ for $g_j:\mathbb{R}^d\mapsto \mathbb{R}$ some multivariate polynomials, and assume that for at least one index $i\in[m]$ the set $\{x\in\mathbb R^d\,:\,g_i(x)\geq0\}$ is compact. \textbf{Then}, if a polynomial $f$ is strictly positive on $K$, there exist sum-of-squares polynomials $\{f_j\}_{j=0}^m$, such that
\begin{equation}
\label{eq:positivstell}
f = f_0 + \sum_{j=1}^m f_j g_j.
\end{equation}
\end{theorem}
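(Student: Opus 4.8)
The plan is to prove the theorem by a Hahn--Banach separation argument in the space of polynomials, followed by a moment-representation step that converts the separating functional into integration against a positive measure supported on $K$; the strict positivity of $f$ is what ultimately produces the contradiction, and it cannot in general be relaxed to mere non-negativity. First I would package the right-hand side of \eqref{eq:positivstell} as the convex cone
\[
M = \Big\{\, \sigma_0 + \sum_{j=1}^m \sigma_j\, g_j \ : \ \sigma_0,\dots,\sigma_m \text{ are SOS polynomials} \,\Big\} \subseteq \bR[x_1,\dots,x_d],
\]
the \emph{quadratic module} generated by the $g_j$, so that the assertion to prove is precisely $f \in M$. The single structural consequence I need from the compactness hypothesis is the \emph{Archimedean} property: there exists $N>0$ with $N-\norm{x}^2 \in M$. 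Since $\{x : g_i(x)\ge 0\}$ is compact it is bounded, so $N-\norm{x}^2$ is strictly positive on it for $N$ large; adjoining the redundant ball constraint $N-\norm{x}^2\ge 0$ (which leaves $K$ unchanged) places $N-\norm{x}^2$ in $M$. This bound is what will later confine the support of the measure and control the growth of the functional.

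Second, I would argue by contradiction. Suppose $f\notin M$. Since $M$ is a convex cone containing $1$, a suitable form of the Hahn--Banach separation theorem on $\bR[x]$ (working in the finest locally convex topology, where the Archimedean condition supplies the closedness one needs) yields a nonzero linear functional $L:\bR[x]\to\bR$ with $L(p)\ge 0$ for every $p\in M$ and $L(f)\le 0$. The Archimedean property forces $L(1)\neq 0$, since $L(1)=0$ together with $N-\norm{x}^2\in M$ would propagate to $L\equiv 0$; hence I may normalize $L(1)=1$. Positivity of $L$ on all squares then makes $(p,q)\mapsto L(pq)$ a positive semidefinite bilinear form, so that $L$ behaves like a state.

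Third --- and this is the crux --- I would show that $L$ is represented by a positive Borel measure $\mu$ with $L(p)=\int p\,d\mu$ and $\supp\mu\subseteq K$. This is exactly where the Archimedean property does the essential work: the relation $N-\norm{x}^2\in M$ bounds $L(\norm{x}^{2k})$ by $N^k$, which controls all moments and makes $L$ continuous enough to invoke a Riesz/Haviland-type representation theorem, or equivalently a GNS construction followed by the spectral theorem. Positivity of $L$ on squares gives $\mu\ge 0$; positivity of $L$ on $\sigma\, g_j$ for every SOS $\sigma$ forces $g_j\ge 0$ $\mu$-almost everywhere and hence $\supp\mu\subseteq K$; and the moment bound confines that support to a compact set. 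Making this representation rigorous --- the boundedness/continuity of $L$ and the passage to a genuine measure --- is the main obstacle, and it is where essentially all of the analytic content of the theorem resides.

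Finally, with such a $\mu$ in hand the contradiction is immediate: since $f>0$ on the compact set $K$, $\supp\mu\subseteq K$, and $\mu$ is a nonzero positive measure with $\int d\mu=L(1)=1$, we obtain $L(f)=\int_K f\,d\mu>0$, contradicting $L(f)\le 0$. This rules out $f\notin M$ and delivers the decomposition \eqref{eq:positivstell}. I would emphasize that it is precisely the strict positivity of $f$ --- equivalently, the uniform lower bound $f\ge\varepsilon>0$ on the compact $K$ --- that provides the slack making the separation and the final strict inequality go through; the non-strict analogue fails in general.
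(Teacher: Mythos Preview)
The paper does not supply a proof of this theorem: it is quoted, with a citation to Putinar's original article, and then used as a black box to motivate the SOS hierarchies. So there is no in-paper argument to compare your proposal against.

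Your outline is the standard strategy and is sound downstream of the Archimedean property, but there is a genuine gap in how you obtain that property from the stated hypothesis. You write that ``adjoining the redundant ball constraint $N-\norm{x}^2\ge 0$ \dots places $N-\norm{x}^2$ in $M$.'' Adjoining a new generator does not place $N-\norm{x}^2$ in $M$; it enlarges the quadratic module to some $M'\supsetneq M$, and your separation/representation argument then only delivers $f\in M'$, i.e.\ a decomposition $f=f_0+\sum_{j=1}^m f_j g_j + f_{m+1}(N-\norm{x}^2)$ with an extra term the stated conclusion does not allow. The implication you actually need --- that compactness of $\{g_i\ge 0\}$ for a single index forces $N-\norm{x}^2$ into the \emph{original} $M$ --- is itself a nontrivial theorem: one standard route is to apply Schm\"udgen's Positivstellensatz to the single constraint $g_i$ (for one generator the preordering and the quadratic module coincide), which places $N-\norm{x}^2$ in the quadratic module generated by $g_i$ alone and hence in $M$. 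That step ultimately rests on the Krivine--Stengle Positivstellensatz and cannot be replaced by a redundancy trick. Once $M$ is genuinely Archimedean, the remainder of your sketch --- Hahn--Banach separation with $1$ as an order unit, the GNS/spectral or moment representation of $L$ as integration against a measure supported in $K$, and the contradiction from strict positivity of $f$ on the compact $K$ --- is the correct architecture.
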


Previous results tell us that not every positive polynomial is SOS; instead \eqref{eq:positivstell} is the next best representation that we can obtain, involving multiple sum of squares.
Notice that the degrees of the SOS polynomials are not known in advance, hence hierarchies are needed. In practice one considers the representations $f=f_0+\sum_j g_j f_j$, with deg($f_j$)$<D$ for all $j$. Increasing $D$ the approximation becomes better and better.
%\textcolor{orange}{Anche se lo avevi gia menzionato nei casi prima, magari un commento sul fatto che anche qui aumentando $m$ (che non e' noto a priori) si possa avere approssimazione migliore o e' banale?}
It is also possible to have a moment view of this result.
\item\textbf{Max-cut \cite{goemans1995improved}:}\\
Consider a weighted graph $G=(V,w)$, with $d=|V|$ vertices and $w\in\mathbb{R}_+^{d\times d}$, positive weights.
A cut is a partition of the vertices of $G$ in two groups.
Each partition can be indicated by a vector $x\in\{-1,+1\}^d$ assigning each vertex to either the group $+1$ or $-1$. In max cut the objective is to find the partition such that the sum of the cross group weight is maximized. In terms of $x$ the objective is
\begin{equation}
    C(x)=\frac{1}{2}\sum_{i,j=1}^d w_{ij}\left(1-x_ix_j\right)=\frac{1}{2}\sum_{i,j=1}^d w_{ij}-\frac{1}{2}x^TWx=\frac{1}{2}\sum_{i,j=1}^d w_{ij}-\frac{1}{2}\tr[W xx^T].
\end{equation}
The optimization can then be expressed as
\begin{equation}
    \text{OPT}=\max_x C(x)=\frac{1}{2}\sum_{i,j=1}^d w_{ij}-\frac{1}{2}\min_{\substack{X: \text{rank}(X)=1, \\X\succcurlyeq 0, X_{ii}=1\, \forall i\in[d]} }\tr[W X],
\end{equation}
where $X\in \mathbb R^{d\times d}$, and the constraint $X_{ii}=1\, \forall i\in[d]$ enforces that $x$ is on the hypercube.
The SDP relaxation consists of removing the rank 1 constraint giving the problem
\begin{equation}
\label{eq:SDP_maxcut}
    \text{SDP}= \frac{1}{2}\sum_{i,j=1}^d w_{ij}-\frac{1}{2}\min_{\substack{X:\,X\succcurlyeq 0, X_{ii}=1\, \forall i\in[d]} }\tr[W X].
\end{equation}
\end{itemize}
We have of course $\text{SDP}\geq\text{OPT}$.
At this point we can ask two questions:
\begin{itemize}
\item How loose is the SDP approximation?
\item Given $\hat X$, a solution of \eqref{eq:SDP_maxcut}, how do we obtain a proper cut $x$?
\end{itemize}
To answer both question we devise the following procedure:
Sample $\Tilde{x}\sim\mathcal{N}(0,X)$, estimate the max weight cut as $x=\sign(\Tilde{x})$. 
\begin{proof}
Denote $\EX$ the expectation with respect to $\Tilde{x}$, and write $X_{ij}=u_i^Tu_j$ with $u_i\in\mathbb R^d$. Then we have $x_i=\sign(u_i^Tz)$ with $z\sim\mathcal{N}(0,\mathbb{I}_d)$.
\begin{align}
    \text{OPT}&\geq\EX\left[\frac{1}{2}\sum_{i,j=1}^d w_{ij}\left(1-x_ix_j\right)\right]=\sum_{ij}w_{ij}\mathbb{P}(\Tilde{x}_i\Tilde{x}_j>0)=\\&=\sum_{ij}w_{ij}\mathbb{P}(u_i^T z u_j^Tz>0)=\sum_{ij}w_{ij}\left(1-\frac{1}{\pi}\arccos{(u_i^Tu_j)}\right)=\\&=\sum_{ij}w_{ij}\frac{1}{2} \left(\frac{2}{\pi}\frac{\arccos{(u_i^Tu_j)}}{1-u_i^T u_j}\right) \left(1-u_i^T u_j\right)\overset{(a)}{\geq} \alpha\frac{1}{2}\sum_{ij}w_{ij}(1-X_{ij})=\alpha \text{SDP},
\end{align}
with $\alpha = \frac{2}{\pi} \min_{z \in [-1,1]}  \frac{\arccos(z)}{1-z}\approx 0.87856$. $(a)$ follows from the fact that the probability of $z$ having positive or negative scalar product with both $u_i$ and $u_j$ is $1-\theta_{ij}/\pi$ with $\theta_{ij}$ being the angle between $u_i$ and $u_j$. Figure \ref{fig:maxcut} further explains this point.
\begin{figure}
    \centering
    \includegraphics[width=4.5cm]{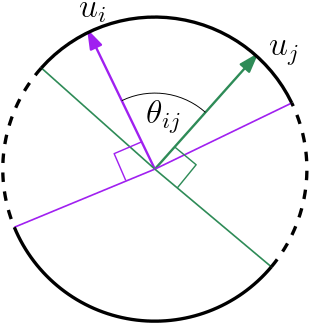}
    \caption{Geometrical representation showing the probability that $(z^T u_i)(z^T u_j)>0$: The continuous black arcs represent where $(z^T u_i)(z^T u_j)>0$, the dashed lines represent where instead $(z^T u_i)(z^T u_j)<0$. The probability that a random angle lands in the continuous sector is $1-\frac{\theta}{\pi}=1-\frac{1}{\pi}\arccos(u_i^Tu_j)$.}
    \label{fig:maxcut}
\end{figure}
\end{proof}
\subsection{Kernel methods}
One way to increase the expressivity of the feature space is to make it infinite dimensional. This is possible through kernel methods.
Consider a map $\varphi:\X\mapsto \mathcal{F}$, with $\mathcal{F}$ some Hilbert space, with inner product $\langle,\rangle$. We can then define a new Hilbert space whose elements are the functions which are linear in $\varphi$. Let $\mathcal{F}'=\left\{g:\X\mapsto \mathbb{R}\,:\, g(x)=\langle f,\varphi(x)\rangle \text{ for some } f\in\mathcal{F}\right\}$. In the following we will assume that $\mathcal F $ is a reproducing kernel Hilbert space (RKHS).
In this case, assuming $k(x,y)=\langle\varphi(x),\varphi(y)\rangle$ is the reproducing kernel, we can identify $\mathcal{F}'$ and  $\mathcal{F}$. In fact by the reproducing property   every function in $\mathcal{F}$ can be expressed in the form $f(x)=\langle f,\varphi(x)\rangle$.
When minimizing functions over $\mathcal{F}$ we have the following result.
\begin{theorem}\label{rep_thm}
\textbf{Representer theorem \cite{kimeldorf1970correspondence}}: \\Let $\mathcal{F}$ be an RKHS, with reproducing kernel $k(x,y)=\langle\varphi(x),\varphi(y)\rangle$. \\ Also let $L:\mathbb{R}^n\mapsto \mathbb{R}$ be an arbitrary error function and $\{x_1,x_2,\dots,x_n\}, \,x_i\in\X $ a set of points. \\Consider the problem
\begin{equation}
    \label{eq:min_hilbert}
    \min_{f\in\mathcal{F}} L\left(\langle f, \varphi(x_1)\rangle,\dots,\langle f, \varphi(x_n)\rangle\right)+g(\norm{f})
\end{equation}
with $g$ an increasing function.\\
\textbf{Then} any minimizer $\hat f$ of \eqref{eq:min_hilbert} can be written as
$f=\sum_{i=1}^n \alpha_i \varphi(x_i)$. Making the function explicit gives
\begin{equation}
    \hat{f}(x)=\sum_{i=1}^n \alpha_i \langle \varphi(x_i),\varphi(x)\rangle=\sum_{i=1}^n \alpha_i k(x,x_i).
\end{equation}
\end{theorem}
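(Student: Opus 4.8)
The plan is to exploit the fact that the objective in \eqref{eq:min_hilbert} depends on $f$ only through the finitely many evaluations $\langle f, \varphi(x_i)\rangle$ and through the norm $\norm{f}$, and to argue that any component of $f$ orthogonal to the data features can only hurt. First I would introduce the finite-dimensional subspace $\mathcal{F}_\parallel = \mathrm{span}\{\varphi(x_1),\dots,\varphi(x_n)\} \subseteq \mathcal{F}$. Being finite-dimensional it is closed, so $\mathcal{F}$ decomposes orthogonally as $\mathcal{F} = \mathcal{F}_\parallel \oplus \mathcal{F}_\parallel^{\perp}$, and every $f \in \mathcal{F}$ admits a unique decomposition $f = f_\parallel + f_\perp$ with $f_\parallel \in \mathcal{F}_\parallel$ and $f_\perp \in \mathcal{F}_\parallel^{\perp}$.

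The second step is to show that the loss term is blind to $f_\perp$. For each data point, $\langle f, \varphi(x_i)\rangle = \langle f_\parallel, \varphi(x_i)\rangle + \langle f_\perp, \varphi(x_i)\rangle = \langle f_\parallel, \varphi(x_i)\rangle$, since $\varphi(x_i) \in \mathcal{F}_\parallel$ is orthogonal to $f_\perp$. Hence the whole argument of $L$ is unchanged if we replace $f$ by its projection $f_\parallel$. The third step handles the regularizer via the Pythagorean identity $\norm{f}^2 = \norm{f_\parallel}^2 + \norm{f_\perp}^2 \geq \norm{f_\parallel}^2$, giving $\norm{f_\parallel} \leq \norm{f}$ with equality iff $f_\perp = 0$. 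Since $g$ is increasing, $g(\norm{f_\parallel}) \leq g(\norm{f})$. Combining the two steps, the objective evaluated at $f_\parallel$ never exceeds that at $f$, so there is always a minimizer lying in $\mathcal{F}_\parallel$, i.e.\ of the form $\hat{f} = \sum_{i=1}^n \alpha_i \varphi(x_i)$. Plugging this into the identification $f(x) = \langle f, \varphi(x)\rangle$ and using the reproducing property $\langle \varphi(x_i), \varphi(x)\rangle = k(x,x_i)$ yields the stated expansion.

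The main obstacle is really a subtlety hidden in the word \emph{any} in ``any minimizer''. The monotonicity argument as phrased only exhibits \emph{some} minimizer of the desired form; to conclude that \emph{every} minimizer lies in $\mathcal{F}_\parallel$ one needs $g$ to be \emph{strictly} increasing, so that $f_\perp \neq 0$ forces $g(\norm{f_\parallel}) < g(\norm{f})$, strictly lowering the objective and thereby excluding any minimizer with nonzero orthogonal part. If $g$ is merely nondecreasing, one can only assert the \emph{existence} of a minimizer of this form, not that the representation captures all of them. I would make this distinction explicit in the write-up and invoke strict monotonicity whenever the stronger ``any minimizer'' conclusion is required.
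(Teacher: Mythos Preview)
Your argument is the standard orthogonal-projection proof of the representer theorem and is correct; you also correctly flag the gap between ``increasing'' and ``strictly increasing'' needed to upgrade from \emph{some} minimizer to \emph{every} minimizer. There is nothing to compare against: the paper does not supply a proof of this theorem at all --- it merely states it with a citation to Kimeldorf--Wahba and then moves on to compute $\norm{\hat f}^2 = \alpha^\top K \alpha$. So your write-up would in fact be \emph{adding} a proof where the paper has none, and the approach you outline is exactly the classical one.
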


In the setting of \ref{rep_thm} we have
\begin{equation} \label{eq:kernel_norm}
    \norm{\hat f}^2=\sum_{ij=1}^n\alpha_i\alpha_j\langle k(\cdot,x_i),k(\cdot,x_j)\rangle=\sum_{ij=1}^n\alpha_i\alpha_j k(x_i,x_j)=\alpha^T K\alpha,
\end{equation}
where we introduced the kernel matrix $K\in\mathbb{R}^{n\times n}$, $K_{ij}=k(x_i,x_j)$.\\
The importance of the representer theorem lies in the fact that the possibly infinite dimensional problem \eqref{eq:min_hilbert} admits a solution in an $n-$dimensional linear function space (moreover for particular forms of  $L,g$ the solution is explicit).

We have seen how an RKHS gives a reproducing kernel $K$. In the other direction we can prove that given a positive semidefinite kernel $k:\X\times \X\mapsto \mathbb{R}$, there exists a unique associated RKHS 
\begin{theorem}
\textbf{Moore Aronszjan \cite{aronszajn1950theory}:}\\
Let $k:\X\times \X\mapsto \mathbb{R}$ be a positive semidefinite kernel, in the sense that for every $\{x_1,\dots,x_n\}$, the kernel matrix $K_{ij}=k(x_i,x_j)$ is positive semidefinite. \\
\textbf{Then} there exists a Hilbert space with inner product $\langle,\rangle$, and a feature map $\varphi: \X\mapsto \mathcal{F}$, such that $k(x,y)=\langle\varphi(x),\varphi(y)\rangle$.
\end{theorem}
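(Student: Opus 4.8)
The plan is to build the Hilbert space directly out of the kernel itself, taking the functions $k(\cdot,x)$ as the candidate features. First I would set $\varphi(x)=k(\cdot,x)$ and define the pre-Hilbert space $\mathcal{F}_0$ as the linear span of $\{k(\cdot,x)\,:\,x\in\X\}$, i.e. the finite combinations $f=\sum_i\alpha_i k(\cdot,x_i)$. On $\mathcal{F}_0$ I would define the bilinear form
$$\langle f,g\rangle=\sum_{i,j}\alpha_i\beta_j\,k(x_i,y_j)$$
for $f=\sum_i\alpha_i k(\cdot,x_i)$ and $g=\sum_j\beta_j k(\cdot,y_j)$. The first thing to check is that this does not depend on the chosen representations: rewriting it as $\langle f,g\rangle=\sum_j\beta_j f(y_j)=\sum_i\alpha_i g(x_i)$ shows the value depends only on $f$ and $g$ as functions on $\X$, so the form is well defined.

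Next I would verify the reproducing property $\langle f,k(\cdot,x)\rangle=f(x)$, which is immediate from the definition; specializing to $f=k(\cdot,y)$ gives $\langle\varphi(x),\varphi(y)\rangle=\langle k(\cdot,x),k(\cdot,y)\rangle=k(x,y)$, which is exactly the factorization the theorem demands. Symmetry and bilinearity are clear by construction, and the positivity $\langle f,f\rangle=\sum_{i,j}\alpha_i\alpha_j k(x_i,x_j)=\alpha^\top K\alpha\geq0$ is precisely the positive-semidefiniteness hypothesis on $k$.

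The hard part will be upgrading this merely positive-semidefinite form to a genuine inner product, i.e. showing $\langle f,f\rangle=0\Rightarrow f\equiv0$. Here I would invoke the Cauchy–Schwarz inequality, which holds for any symmetric positive-semidefinite bilinear form even without definiteness, and combine it with the reproducing property: for every $x$,
$$|f(x)|=|\langle f,k(\cdot,x)\rangle|\leq\sqrt{\langle f,f\rangle}\,\sqrt{k(x,x)}=0,$$
so $f$ vanishes pointwise and is therefore the zero function. This makes $(\mathcal{F}_0,\langle\cdot,\cdot\rangle)$ a true inner-product space.

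Finally I would take the completion $\mathcal{F}=\overline{\mathcal{F}_0}$ to obtain a Hilbert space. The remaining point to be careful about is that abstract limits in $\mathcal{F}$ can still be identified with honest functions on $\X$: applying the same Cauchy–Schwarz bound shows that a Cauchy sequence in $\mathcal{F}_0$ converges pointwise, and that the pointwise limit is well defined and independent of the sequence, so the inclusion of $\mathcal{F}$ into functions on $\X$ is consistent. The feature map $\varphi(x)=k(\cdot,x)$ and the identity $k(x,y)=\langle\varphi(x),\varphi(y)\rangle$ then survive the completion unchanged, which proves the claim. (If uniqueness were also wanted, it would follow because any RKHS with kernel $k$ must contain every $k(\cdot,x)$ with the prescribed inner products and coincide with their closed span.)
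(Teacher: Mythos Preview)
Your argument is the standard Aronszajn construction and is correct in all essential points: the well-definedness via $\langle f,g\rangle=\sum_j\beta_j f(y_j)$, positivity from the PSD hypothesis, definiteness from Cauchy--Schwarz combined with the reproducing property, and completion with pointwise identification of limits. Nothing is missing.

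As for comparison: the paper does not actually prove this theorem. It only states it, cites Aronszajn's 1950 paper, and moves on to examples (Sobolev spaces, the Gaussian kernel). So there is no ``paper's own proof'' to compare against; your proposal supplies the argument that the lecture notes omit. What you have written is precisely the classical construction one would find in Aronszajn's original paper or in any standard RKHS reference, so it is entirely appropriate here.
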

One example of RKHS are Sobolev spaces in $\mathbb R^d$.
We define the Sobolev space $\mathcal H_s$ of order $s$ as the space of functions whose derivatives up to order $s$ are in $L^2(\mathbb{R}^d)$. If $s>d/2$, $\mathcal H_s$ can be represented as an RKHS, with kernel $k(x,y)=q(x-y)$. And
\begin{equation}
\label{eq:4ier_kernel_sobolev}
    \hat{q}(\omega)\propto \frac{1}{\left(1+\norm{\omega}_2^2\right)^s},
\end{equation}
with $\hat{q}$ being the Fourier transform of $q$.
The RKHS norm can then be written as
\begin{equation}
    \norm{f}^2_q=\int \, |\hat f(\omega)|^2 \left(1+\norm{\omega}_2^2\right)^s d\omega,
\end{equation}
The larger $s$ the more high frequencies are penalized.
For $s=d/2+1/2$ the kernel corresponding to $q$ is the Laplace kernel (see \cite{bach2021learning} p. 165)
\begin{equation}
    k(x,y)=e^{-\norm{x-y}_2}.
\end{equation}
Another example of RKHS is that induced by the Gaussian kernel $k(x,y)=e^{-||x-y||_2^2}$
\subsection{k-SOS relaxation \cite{marteau2020non}\cite{marteau2022second}\cite{rudi2020finding}}
Consider the SOS relaxation with $\varphi:\X\mapsto \mathcal{F}$, with $\mathcal{F}$ the feature space associated to a Sobolev kernel of order $r$ on $\mathbb R^d$.  We restrict the optimization to $\X\subset \mathbb{R}^d$, e.g., $\X=[-1,1]^d$.
Analogously to \eqref{eq:sos_primal_relaxed} we have
\begin{align}
\inf_{x \in \X} h(x) & = \sup_{c \in \mathbb{R}} \ c \mbox{ such that } \forall x \in \X, h(x) - c \geqslant 0  \\
\label{eq:sos_primal_relaxed_ker}
& \geqslant  \sup_{c \in \mathbb{R}, \ A \succcurlyeq 0} \ c\  \mbox{ such that } \forall x \in \X,  h(x) - c = \varphi(x)^\ast A \varphi(x),
\end{align}
where this time $A:\mathcal{F}\mapsto\mathcal{F}$ is a positive definite self-adjoint operator.
Several questions arise:
\begin{itemize}
\item Is the problem feasible? In other words, can $h(x)-c$ be represented as $\varphi(x)^\ast A \varphi(x)$?  Yes if $h$ is sufficiently differentiable.
\item Is the relaxation tight? Yes, if the minimizer satisfies a local Hessian condition.
\item Is the problem solvable in polynomial time? Recall that in this case the optimization over $A$ would be infinite dimensional so we cannot solve it directly like in \eqref{eq:sos_view}. We will see that we can still solve the problem in polynomial time, through sampling, regularization and a new representer theorem.
\end{itemize}
\subsection{Representation of a non-negative function as a sum-of-squares}
To answer the first question suppose $h:\mathbb{R}^d\mapsto [0,\infty)$ is a positive function in $C^m$ (i.e., $h$ has continuous derivatives up to order $m$). Can we represent $h$ as a sum of squared functions, each of which is in $C^{m'}$, with $m'\leq m$?

One (naive) solution would be to take $h(x)=(\sqrt{h(x)})^2$. This works with $m'=m$ as long as $h$ is strictly positive, in order to preserve differentiability. If $h=0$ somewhere then square root corresponds to $m'=0$. In principle we would like to find solutions in smaller classes of functions (ideally $m'=m$) so the square root is not satisfactory.

We will now state a solution which achieves $m'=m-2$, under some extra conditions
\begin{prop}
\label{prop_SOS}
Let $h:\X\mapsto [0,\infty)$, be a $C^m$ ($m\geq2$) function with $\X\subset\mathbb{R}^d$ a compact set. Also suppose $\hat x$ is the unique global optimum of $h$, located in the interior of $\X$, and with $h(\hat x)=0$. Finally suppose that the Hessian of $h$ at $\hat x$ is positive definite.\\
\textbf{Then} $h$ admits an SOS representation with $m'=m-2$ and at most $d+1$ factors.

\end{prop}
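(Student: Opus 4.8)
The plan is to split $\X$ into a neighborhood of the minimizer $\hat x$, where the positive-definite Hessian yields a sum of $d$ squares from a local quadratic representation, and a region bounded away from $\hat x$, where $h$ is strictly positive so that $\sqrt h$ supplies one extra square; a smooth partition of unity then glues the two pieces into the claimed $d+1$ factors. First I would use that $\hat x$ is an interior global minimum with $h(\hat x)=0$, hence $\nabla h(\hat x)=0$, and apply Taylor's theorem with integral remainder (Hadamard's lemma):
\begin{equation}
h(x)=\tfrac12 (x-\hat x)^\top H(x)(x-\hat x),\qquad H(x)=2\int_0^1 (1-t)\,\nabla^2 h\big(\hat x+t(x-\hat x)\big)\,dt.
\end{equation}
Since $h\in C^m$ its Hessian is $C^{m-2}$ and the integral preserves this, so $H$ is a $C^{m-2}$ symmetric-matrix field with $H(\hat x)=\nabla^2 h(\hat x)\succ 0$; by continuity $H(x)\succ 0$ on an open neighborhood $U$ of $\hat x$.

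On $U$ I would take the matrix square root $R(x)=H(x)^{1/2}$. Because $M\mapsto M^{1/2}$ is $C^\infty$ on the cone of symmetric positive-definite matrices (smooth functional calculus, equivalently the implicit function theorem applied to $R^2=M$), $R$ inherits the $C^{m-2}$ regularity of $H$, with no difficulty arising from coinciding eigenvalues. Then on $U$
\begin{equation}
h(x)=\tfrac12\,\norm{R(x)(x-\hat x)}_2^2=\sum_{i=1}^d\Big[\tfrac1{\sqrt2}\,\big(R(x)(x-\hat x)\big)_i\Big]^2
\end{equation}
is a sum of $d$ squares of $C^{m-2}$ functions. Away from $\hat x$, compactness of $\X$ together with uniqueness of the minimizer gives $h\geq\delta>0$ on $\X\setminus U'$ for a smaller neighborhood $U'$ with $\overline{U'}\subset U$, so $\sqrt h$ is $C^m$ there.

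To glue the pieces I would use the $\cos/\sin$ device: pick a smooth $\theta:\X\to[0,\tfrac\pi2]$ with $\theta\equiv 0$ on $U'$ and $\theta\equiv\tfrac\pi2$ off $U$, and set $\phi_0=\cos\theta$, $\phi_1=\sin\theta$, so that $\phi_0^2+\phi_1^2=1$ with both factors smooth, $\mathrm{supp}\,\phi_0\subset U$, and $\phi_1\equiv0$ near $\hat x$. Then
\begin{equation}
h=\phi_0^2 h+\phi_1^2 h=\sum_{i=1}^d\Big[\tfrac1{\sqrt2}\,\phi_0\,\big(R(x-\hat x)\big)_i\Big]^2+\big[\phi_1\sqrt{h}\big]^2
\end{equation}
exhibits $h$ as a sum of $d+1$ squares; each summand extends by $0$ outside its support to a globally $C^{m-2}$ function (the last one is in fact $C^m$, since $\phi_1$ vanishes where $h=0$).

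I expect the main obstacle to be the regularity bookkeeping at the two points where smoothness could drop. The first is the square-root singularity produced by a naive partition of unity, which is why I use the $\cos/\sin$ construction to keep $\phi_0,\phi_1$ genuinely smooth. The second, and more essential, is the claim that the square root of the $C^{m-2}$ field $H$ is again $C^{m-2}$ throughout $U$, including the behaviour of the factorization as the eigenvalues of $H(x)$ cross; establishing this through the smoothness of functional calculus on positive-definite matrices is the technical heart of the argument. Once it is in place, the count of exactly $d+1$ factors and the loss of exactly two derivatives follow immediately.
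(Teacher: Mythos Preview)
Your proposal is correct and follows essentially the same route as the paper: Taylor expansion with integral remainder to get $h(x)=(x-\hat x)^\top R(x)(x-\hat x)$ with $R$ positive definite near $\hat x$, the $C^\infty$ matrix square root to produce $d$ local squares of regularity $C^{m-2}$, strict positivity (hence smoothness of $\sqrt{h}$) away from $\hat x$, and a partition of unity to glue for a total of $d+1$ factors. Your explicit $\cos/\sin$ construction for the partition is a useful refinement over the paper's terser ``using a partition of unity'', but the argument is the same.
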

\begin{proof}
Without loss of generality assume $\hat{x}=0$.
We can write $h$'s Taylor expansion in $\hat x$ with integral remainder as
\begin{equation}
    h(x)=h(0)+\nabla h(0)^Tx+\int_0^1(1-t)x^T\nabla^2 h(tx) \,x\, dt.
\end{equation}
    The Hessian must be strictly positive definite near $0$, since we assumed this is an interior point. So we have $\nabla^2 h(tx)\succcurlyeq \lambda I$ for $\norm{x}$ small enough. Using the fact that in zero the gradient and the function vanish we have
    \begin{equation}
        h(x)=x^TR(x)x,\,\, \text{with}\,\,\, R(x)=\int_0^1(1-t)\nabla^2h(tx)dt\succcurlyeq \frac{\lambda}{2}I.
    \end{equation}
    $h$ can then be written in the form
    \begin{equation}
    g(x)=x^T R(x)^{1/2}R(x)^{1/2}x=\sum_{i=1}^d\left(R^{1/2}(x)x\right)^2,
\end{equation}
 where the matrix square root is $C^\infty$ because $R(x)\succcurlyeq \frac{\lambda}{2}I$\cite{del2018taylor}.
with factors that are $C^{m-2}$, i.e., the regularity of the Hessian.
    When instead $\norm{x}$ is not sufficiently small, by the assumptions of unique global optimum and positive definite Hessian we have that $h(x)-h(0)$ is positive, hence $\sqrt{h(x)}$ is $C^m$. Using a partition of unity one can bridge the two expressions, one close and the other far from zero. This yields an SOS representation with $d+1$ terms.
\end{proof}
This result can also be generalized to manifolds of global minima.
As a consequence of proposition \ref{prop_SOS}, if $h$ is a $C^m$ function with isolated global minima, and we pick $\mathcal{F}$ to be the RKHS with kernel \eqref{eq:4ier_kernel_sobolev} and $s \geq m-2$, then the relaxation \eqref{eq:sos_primal_relaxed_ker} is tight. In other words there exists $A^\star$ such that $h(x)-\inf_{x\in\X}h(x)=\varphi(x)^*A^\star\varphi(x)$ for all $x\in\X$.

\subsection{Controlled approximation through subsampling}
The relaxation 
\begin{equation}
\inf_{x \in \X} h(x)  \geqslant  \sup_{c \in \mathbb{R}, \ A \succcurlyeq 0} \ c\  \mbox{ such that } \forall x \in \X,  h(x) - c = \varphi(x)^\ast A \varphi(x),
\end{equation}
 with $A:\mathcal{F}\mapsto\mathcal{F}$ involves an infinite number of constraints (one for every $x\in\X$) and an optimization over operators in infinite dimensions. These facts make the problem impossible to be solved on a computer as is.
 To circumvent the difficulties we use the subsampling technique. In other words we replace $\forall x \in \X,  h(x) - c = \varphi(x)^\ast A \varphi(x)$ with $\forall i\in[n],\,\, h(x_i) - c = \varphi(x_i)^\ast A \varphi(x_i)$,
 where $x_i$ are sampled uniformly in $\X$.
 
 To avoid overfitting we regularize using $\tr[A]$, hence penalizing operators with large trace. Any other function providing control over the eigenvalues would also work.
 The new problem is 
 \begin{equation}
 \label{eq:reg_subsamp_primal}
     \sup_{c \in \mathbb{R}, \ A \succcurlyeq 0} c-\lambda\tr[A] \text{ s.t. }\forall i\in[n],\,\, h(x_i) - c = \varphi(x_i)^\ast A \varphi(x_i).
 \end{equation}
 Notice that the solution to this problem is a lower bound to the right hand side of 
 \begin{prop}
 In the setting \eqref{eq:reg_subsamp_primal} let $h$ be $C^m$, $\lambda>0$, $A:\mathcal{F}\mapsto\mathcal{F}$, with $\mathcal F$ a Sobolev space of order $s\leq m-2$. Let $n\geq C(n,d) \epsilon^{-\frac{d}{m-3}}$. \textbf{Then} with high probability with respect to sampling
 \begin{equation}
     \left|\left[\inf_{x \in \X} h(x)\right]-\left[\sup_{c \in \mathbb{R}, \ A \succcurlyeq 0} c-\lambda\tr[A] \text{ s.t. }\forall i\in[n],\,\, h(x_i) - c = \varphi(x_i)^\ast A \varphi(x_i)\right]\right|<\epsilon.
 \end{equation}
 \end{prop}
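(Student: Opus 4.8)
The plan is to bound the optimal value of the regularized subsampled program, which I denote $\hat c$, on both sides by $\inf_{x\in\X}h(x)$, writing the total error as the sum of a \emph{regularization bias} and a \emph{sampling error}, and then optimizing the penalty $\lambda$ against the number of samples $n$. The reference object throughout would be the exact certificate $A^\star\succcurlyeq 0$ supplied by Proposition~\ref{prop_SOS}, which satisfies $h(x)-\inf_{x\in\X}h(x)=\varphi(x)^\ast A^\star\varphi(x)$ for all $x\in\X$: the hypotheses here (a $C^m$ function with an isolated, interior, nondegenerate global minimizer $\hat x$) are exactly those needed to invoke it, and the Sobolev order $s\leq m-2$ is chosen precisely so that the $C^{m-2}$ factors $R^{1/2}(x)\,x$ produced in that proof live in $\mathcal F$ with finite norm, making $A^\star$ trace class with $\tr[A^\star]$ controlled by $m$ and $d$.

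The lower bound on $\hat c$ is the short direction. Since the exact identity holds in particular at the draws $x_1,\dots,x_n$, the pair $(c,A)=(\inf_{x\in\X}h,\,A^\star)$ is feasible for the subsampled equality constraints, and evaluating the objective gives $\hat c\geq \inf_{x\in\X}h(x)-\lambda\,\tr[A^\star]$; this contributes an error of order $\lambda$ with no sample dependence. A trivial upper bound also follows from feasibility, since $\hat A\succcurlyeq 0$ forces $h(x_i)-\hat c=\varphi(x_i)^\ast\hat A\varphi(x_i)\geq 0$, whence $\hat c\leq\min_i h(x_i)$; bounding $\min_i h(x_i)-\inf_{x\in\X}h(x)$ by the local quadratic behaviour near $\hat x$ and the fill distance $\delta_n=\sup_{x\in\X}\min_i\norm{x-x_i}$ only yields the crude rate $n\sim\epsilon^{-d/2}$ and does not exploit the smoothness, so I would not use it for the sharp estimate.

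The real work is a refined upper bound on $\hat c$ that uses the full $C^m$ regularity. I would introduce the residual $g(x)=h(x)-\hat c-\varphi(x)^\ast\hat A\varphi(x)$, which vanishes at every sample and, crucially, need not vanish at $\hat x$. Evaluating at the minimizer gives $\inf_{x\in\X}h(x)-\hat c=g(\hat x)+\varphi(\hat x)^\ast\hat A\varphi(\hat x)\geq g(\hat x)\geq-\norm{g}_\infty$, so $\hat c\leq\inf_{x\in\X}h(x)+\norm{g}_\infty$. The point is then to control $\norm{g}_\infty$ for a smooth function vanishing on a dense net: using the representer theorem to reduce the operator program to a finite $n\times n$ SDP, the regularizer bounds the empirical trace by $\tr[\hat A]\lesssim\lambda^{-1}$, which in turn bounds $\norm{g}_{H^s}$ in terms of $\norm{h}_{C^m}$ and $\tr[\hat A]$ through the smoothness of the Sobolev kernel. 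A scattered-data \emph{sampling inequality} of the form $\norm{g}_\infty\lesssim\delta_n^{\,\beta}\,\norm{g}_{H^s}$ for $g$ zero on the net then transfers the off-sample mismatch into a bound of the schematic shape $\lambda\,\tr[A^\star]+C\,\delta_n^{\,\beta}/\lambda$. The probabilistic statement enters at two points: the fill distance of $n$ i.i.d.\ uniform points concentrates around $(\log n/n)^{1/d}$, so $\delta_n\sim n^{-1/d}$, and a self-adjoint operator Bernstein inequality is needed to pass from population to empirical covariance operators. Optimizing over $\lambda$ balances the two terms and, together with $\delta_n\sim n^{-1/d}$, produces the advertised polynomial rate $n\gtrsim\epsilon^{-d/(m-3)}$.

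I expect the main obstacle to be exactly this refined sampling step: controlling the constraint residual \emph{uniformly} over $x\in\X$ (not merely at the draws) with the sharp power of the fill distance, while simultaneously keeping the trace of the empirical solution bounded through the regularizer. Obtaining the precise exponent $m-3$ rather than a weaker power requires matching the scattered-data interpolation bound in $H^s$ carefully to the $C^{m-2}$ regularity of the certificate and to the loss incurred in passing to an $L^\infty$ estimate, and the operator concentration must hold uniformly enough to survive the final $\lambda$–$\delta_n$ optimization; this bookkeeping, which is the technical heart of the cited works, is where I anticipate all the difficulty to concentrate.
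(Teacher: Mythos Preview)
The paper does not actually prove this proposition. It is stated as a result imported from the cited works \cite{rudi2020finding,marteau2020non,marteau2022second}, and the surrounding text only adds commentary (the information-theoretic lower bound $n\geq C'(d)\epsilon^{-d/m}$, the necessity of $\lambda>0$, etc.) before moving on to the representer theorem. So there is no ``paper's own proof'' to compare against.

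That said, your outline is the correct one and matches the strategy in those references. The two-sided control of $\hat c$ via (i) plugging the exact certificate $(c^\star,A^\star)$ from Proposition~\ref{prop_SOS} into the subsampled constraints, and (ii) bounding the off-sample residual $g(x)=h(x)-\hat c-\varphi(x)^\ast\hat A\varphi(x)$ through a scattered-data sampling inequality in the Sobolev scale, is precisely how the rate $n\gtrsim\epsilon^{-d/(m-3)}$ is obtained. Your identification of the trace regularizer as the device that keeps $\tr[\hat A]$ (hence the relevant Sobolev norm of $g$) under control is also right; note that the bound $\tr[\hat A]\leq \tr[A^\star]+\lambda^{-1}(\hat c-c^\star)$ follows directly from optimality, so the $1/\lambda$ scaling you quote is the correct heuristic.

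Two small remarks. First, the probabilistic input is lighter than you suggest: once the fill distance $\delta_n$ of the i.i.d.\ draws is controlled (which is the ``with high probability'' part), the sampling inequality is deterministic, so an operator Bernstein inequality for covariance operators is not needed in this argument. Second, the passage from $\tr[\hat A]$ to a Sobolev norm bound on $g$ is not quite ``$\norm{g}_{H^s}$ bounded by $\tr[\hat A]$'': one works instead in the space where functions of the form $x\mapsto\varphi(x)^\ast A\varphi(x)$ live (a product-type RKHS), and it is the matching of \emph{that} regularity to the $C^{m-2}$ smoothness of the certificate which produces the exponent $m-3$. You flag this bookkeeping as the hard part, and that is accurate.
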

 The statistical lower bound for the number of queries is $n\geq C'(d)\epsilon^{-\frac{d}{m}}$, however exponential computation (in $n$) might be required to achieve it. Notice that setting $m=0$ we recover the curse of dimensionality $n\geq \epsilon^{-d}$. %In the SOS formulation the problem will be polynomial in $n$, at the price of losing three orders of derivability (i.e. $d/m\mapsto d/(m-3)$). \bl{GP:check!}
 \\
 The regularization is necessary to exploit the differentiable structure of $h$, in fact setting $\lambda=0$ one would get $c^\star=\min_i h(x_i)$, requiring $\epsilon^{-d}$ samples, regardless of $m$.
 One drawback  of the subsampling approach is the fact that $C$ can depend exponentially on $d$.
 
 The previous proposition solves the problem of the infinite number of constraints, but we're still dealing with an optimization problem over the space of infinite dimensional operators. The following representer theorem tells us that we can equivalently solve a finite dimensional problem
 \begin{theorem}
 (Representer theorem for SOS) For any $x_1,\dots,x_n\in \mathcal X$ any solution of \eqref{eq:reg_subsamp_primal} is of the form 
 \begin{equation}
     A=\sum_{i,j=1}^n\varphi(x_i)\varphi(x_j)B_{ij},\quad B\in\mathbb{R}^{n\times n}, B \succcurlyeq 0.
 \end{equation}
 
 \end{theorem}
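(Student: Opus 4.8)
The plan is to exploit the fact that the constraints and the positivity requirement only ``see'' the operator $A$ through its action on the finite-dimensional subspace $\mathcal{S} = \mathrm{span}\{\varphi(x_1),\dots,\varphi(x_n)\}\subset\mathcal F$. Let $P$ denote the orthogonal projection of $\mathcal F$ onto $\mathcal S$, so that $P\varphi(x_i)=\varphi(x_i)$ for every $i$. Given any feasible pair $(c,A)$, I would introduce the compressed operator $\tilde A=PAP$ and show that it is again feasible with the same $c$, attains an objective value at least as large, and that in fact at any maximizer one must already have $A=\tilde A$. The announced representer form then follows by writing down what operators of the form $PAP$ look like.

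First I would check that $\tilde A=PAP$ is admissible and does not hurt the objective. Positivity is preserved, since for any $v\in\mathcal F$ we have $v^\ast PAP\,v=(Pv)^\ast A(Pv)\geq 0$, hence $\tilde A\succcurlyeq 0$. Feasibility with the \emph{same} $c$ holds because $P\varphi(x_i)=\varphi(x_i)$ gives $\varphi(x_i)^\ast \tilde A\,\varphi(x_i)=(P\varphi(x_i))^\ast A(P\varphi(x_i))=\varphi(x_i)^\ast A\,\varphi(x_i)=h(x_i)-c$. For the regularizer, cyclicity of the trace together with $P^2=P$ gives $\tr[\tilde A]=\tr[AP]=\tr[A]-\tr[A(I-P)]$, and since $A\succcurlyeq 0$ and $I-P\succcurlyeq 0$ one has $\tr[A(I-P)]=\tr[A^{1/2}(I-P)A^{1/2}]\geq 0$. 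Thus $\tr[\tilde A]\leq \tr[A]$, and the objective $c-\lambda\tr[\cdot]$ cannot decrease when passing from $A$ to $\tilde A$.

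To upgrade this into a statement about \emph{every} optimizer I would use $\lambda>0$ to make the improvement strict unless $A$ is already supported on $\mathcal S$. Indeed, $\tr[A^{1/2}(I-P)A^{1/2}]=0$ forces the positive semidefinite operator $A^{1/2}(I-P)A^{1/2}$ to vanish, hence $(I-P)A^{1/2}=0$, i.e. $A^{1/2}=PA^{1/2}$; taking adjoints and multiplying gives $A=A^{1/2}A^{1/2}=PAP$. Therefore, if $A\neq PAP$ then $\tr[\tilde A]<\tr[A]$ and $(c,\tilde A)$ strictly beats $(c,A)$, contradicting optimality. So any maximizer must satisfy $A=PAP$, i.e. it is supported on $\mathcal S$.

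It then remains to rewrite an arbitrary positive semidefinite operator supported on $\mathcal S$ in the claimed form. Diagonalizing yields $A=\sum_k \mu_k\, w_k w_k^\ast$ with $\mu_k\geq 0$ and $w_k\in\mathcal S$; expanding each eigenvector as $w_k=\sum_i c_{ki}\varphi(x_i)$ gives $A=\sum_{i,j}\varphi(x_i)\varphi(x_j)^\ast B_{ij}$ with $B=\sum_k \mu_k\, c_k c_k^\ast\succcurlyeq 0$. The main point to handle carefully is precisely this last step when the $\varphi(x_i)$ are linearly dependent: the coefficients $c_{ki}$ are then non-unique, but the construction above still produces a valid positive semidefinite matrix $B$, so the existence of the representation is never in doubt. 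The other place where the hypothesis $\lambda>0$ is essential is the strictness argument of the previous paragraph, which is exactly what makes the conclusion hold for every solution rather than merely for some solution.
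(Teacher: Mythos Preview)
Your argument is correct and is precisely the standard projection/compression proof: replace $A$ by $PAP$, observe that feasibility is preserved and the trace can only drop, then use $\lambda>0$ to force $A=PAP$ at any optimum, and finally expand in the spanning set $\{\varphi(x_i)\}$. The paper does not actually prove this theorem---it defers to \cite{marteau2020non}---and the proof there follows exactly the same route, so there is nothing to contrast.

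Two small remarks worth making explicit. First, your use of trace cyclicity in infinite dimensions is legitimate only for trace-class $A$; this is harmless because any $A$ with $\tr[A]=+\infty$ gives objective value $-\infty$ and is never optimal, but it is worth a sentence. Second, the paper notes that the result extends from $\tr[A]$ to arbitrary spectral penalties; your argument goes through unchanged for any strictly increasing spectral function since the key step $\tr[A(I-P)]>0\Rightarrow$ strict improvement only needs that shrinking the spectrum strictly lowers the penalty.
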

 For a proof see \cite{marteau2020non}.
 The theorem can be generalized to the case where the regularizer (here $\tr[A]$) is an arbitrary spectral norm.
 Letting $K$ be the kernel matrix this immediately implies 
 \begin{equation}
     \varphi(x_i)^\ast A \varphi(x_i)=\sum_{l,j}K(x_i,x_j)B_{lj} K(x_j,x_i)=(KBK)_{ii},
 \end{equation}
 So that the final problem becomes
 \begin{equation}
 \label{eq:final_subsamp}
      \sup_{c \in \mathbb{R}, \ B \succcurlyeq 0} c-\lambda\tr[KBK] \text{ s.t. }\forall i\in[n],\,\, h(x_i) - c = (KBK)_{ii},
 \end{equation}
 which is now an $n$ dimensional semidefinite programming problem, solvable in $O(n^{3.5})$.
 
 From a practical standpoint \eqref{eq:final_subsamp} is solvable up to $n\approx 10^4$.
 Moreover it is crucial that the kernel matrix $K$ is invertible.

\newpage
\section{Lecture 3: From optimization to information theory}
\subsection{Introduction}
Recall that our goal is to take the $\min_{x \in \cX} h(x) $, where $\cX$ is a generic set of inputs. To this extent, we reformulated the minimization problem using the sum-of-square relaxation:
\begin{align}
\min_{x \in \cX} h(x) = \sup_{c, A \succcurlyeq 0} c \quad \text{s.t.} \quad h(x) = c+\langle \phi(x),A\phi(x) \rangle   \qquad \forall x \in \cX .\label{eq:sos_intro3}
\end{align}
In other words, we replaced the positivity of a function by being a sum-of-square of some feature vector $\phi: \cX \to \cF$. In particular, we assumed the feature map $\phi$ to be associated with a kernel $\cK$. We showed that altough typically there is a gap between the LHS and RHS in~\eqref{eq:sos_intro3}, with mild assumptions on $h$ there is tightness. Moreover, we replaced the continuous set of equalities in~\eqref{eq:sos_intro3}, by the following: 
\begin{align} \label{eq:discrete_ineq}
\sup_{c, A \succcurlyeq 0} c - \lambda \tr A \quad \text{s.t.} \quad \forall i \in \{ 1,...,n\} \quad h(x_i) = c+\langle \phi(x_i),A\phi(x_i) \rangle .
\end{align}
We explained that to solve~\eqref{eq:discrete_ineq} in practice, one can use the representer theorem (Theorem~\ref{rep_thm}), which allows to write 
$\langle \phi(x_i),A \phi(x_i) \rangle = (KBK)_{ii} $, with $B \in \bR^{n \times n}$ and $B\succcurlyeq 0$ and $K$ kernel matrix, and similarly $\tr A = \tr (BK)$, and therefore one can solve~\eqref{eq:discrete_ineq} in polynomial time.

In this lecure, we move beyond optimization and extend the sum-of-square technique to other problems: e.g., optimal control, approximation of entropies and relative entropies in information theory.

\subsection{Extension to other infinite dimensional problem}
Recall, the two equivalent problems defined in the previous lectures:
\begin{itemize}
\item   {Primal problem}
\begin{align} \label{eq:primal_prob}
\displaystyle \min_{x \in \cX} \ h(x) = \sup_{c \in \mathbb{R}}\  c \ \ \mbox{ such that } \ \ \forall x \in \cX,\  h(x) - c \geqslant 0.
\end{align}

\item  {Dual problem on probability measures}
\begin{align} \label{eq:dual_prob}
\inf_{\mu \in \mathbb{R}^\cX} \int_\cX \mu(x) h(x) dx \ \ \mbox{ such that } \ \int_\cX \mu(x) dx =1, \ \forall x \in \cX,\ \mu(x) \geqslant 0.
\end{align}
\end{itemize}
The two are formulated on positive functions, and we could a priori apply the sum-of-square relaxation to either of the two inequalities. However, note that in~\eqref{eq:dual_prob} we expect $\mu$ to be a Dirac's delta distribution at the optimum, which implies that the sum-of-square relaxation will not work. On the other hand, in~\eqref{eq:primal_prob}, we expect that the objective function $h$ of global optimization problems is well behaved (e.g. smooth), thus the sum-of-square relaxation will work well.

We now give a generic formulation of a constrained optimization problem, and then specialize it to optimal control. The generic problem can be expressed in the following form:
\begin{align} \label{eq:genericproblem}
    \inf_{ \theta \in \Theta}\  F(\theta) \ \ \mbox{ such that } \ \ \forall x \in \cX, \ g(\theta,x) \geqslant 0,
\end{align}
with $F$ convex and $g$ linear in its first argument $\theta$, and where $\Theta$ denotes a generic vector space.
The sum-of-squares reformulation of~\eqref{eq:genericproblem} is given by
\begin{align} \label{eq:genericSOS}
\inf_{ \theta \in \Theta, \ A \succcurlyeq 0} F(\theta) \ \ \mbox{ such that } \ \ \forall x \in \cX, \ g(\theta,x) {}= \langle \phi(x), A \phi(x) \rangle.
\end{align}
Solving~\eqref{eq:genericSOS} requires penalizing the unconstrained problem by $\Tr(A)$ and performing subsampling. If we expect $g(\theta,x)$ to be smooth at the optimum $\theta^*$, the representation as sum-of-squares allows to benefit from its intrinsic smoothness and get sample complexity guarantees. The sum-of-square relaxation can be performed in the primal and in the dual problem, however depending on the smoothness of these problems either the primal or the dual problem might be a better choice. Let us look at the special case of optimal control (or reinforcement learning) as an example. Another relevant example is optimal transport, which we do not cover here and refer to~\cite{vacher2021dimension}.
% The first one is Optimal Transport (see~\cite{vacher2021dimension}), which we do not cover here. The second one is Optimal control or reinforcement learning, which we cover below.
% \paragraph{Example 1: Optimal transport~\cite{vacher2021dimension}} 
% {\color{purple} Not done in class... Decide what to write}

\paragraph*{Example: Optimal control/Reinforcement learning~\cite{berthier2021optimization}.} Consider the dynamical system
\begin{align}
    &\dot X(t) = f(t, X(t), u(t)) \qquad \forall t\in[t_0, T], \\
    &X(t_0) = x_0,
\end{align}
where $X:[t_0,T]\to \cX $ denotes the state, $u:[t_0,T]\to \mathcal{U}$, denotes the control that we want to impose, for some space $\mathcal{U}$ (usually $\mathcal{U} =\cX$) and $t_0,x_0$ are the initial time and the initial state respectively.
In the optimal control formulation~\cite{liberzon2011calculus}, we choose the control $u(.)$ such that it minimizes a cost, i.e., for fixed T  we want to solve
\begin{align} \label{eq:opt_control}
 \displaystyle V^*(t_0, x_0) = \inf_{u: [t_0,T] \to \mathcal{U}}  \int_{t_0}^T L(t, x(t), u(t)) \text{d}t + M(x(T)),
\end{align}
where we can think of the first term as the cost along the way and the second term as the terminal cost. The solution of~\eqref{eq:opt_control} can be found as a subsolution to the PDE defined by the Hamilton-Jacobi-Bellman (HJB) equation~\cite{vinter1993convex}. In particular, any $V$ that satisfies the following
\begin{align}
    & \displaystyle\sup_{V: [0,T] \times \mathcal{X} \to \mathbb{R}} \int V(0, x_0) \text{d}\mu_0(x_0) \nonumber\\
     & \displaystyle \frac{\partial V}{\partial t}(t, x) + L(t, x, u) + \nabla V(t, x)^\top f(t, x, u)\  \geqslant 0\quad \forall (t, x, u), \label{eq:HJB}  \\
        & V(T, x) = M(x)  \quad \forall x ,\nonumber
\end{align}
is a solution at time $t_0=0$ of~\eqref{eq:opt_control}. Among the works that tackled the above equation, we mention~\cite{lasserre2008nonlinear}, that approaches~\eqref{eq:HJB} through sum-of-squares relaxation; and~\cite{berthier2021optimization} for extension to kernel sums-of-squares.

\subsection{Connection to information theory: log partition function~\cite{bach2022sum,bach2022information}}
% {\color{purple} cite: {\small \url{https://arxiv.org/pdf/2202.08545.pdf}} and {\small \url{https://arxiv.org/pdf/2206.13285.pdf}}}
In this section we show how the sum-of-squares relaxation can be applied to compute relevant information theoretic quantities. For simplicity, we will consider the problem of maximizing a function $h(x)$, instead of minimizing it. Specifically, we consider the log partition function~\cite{bach2022information,bach2022sum} of the function, defined as
\begin{align} \label{eq:logsumexp}
\varepsilon \log \int_\X \exp \left( \frac{ h(x)}{\varepsilon}\right) d q(x),
\end{align}
where we call $q(.) \in \mathcal{P}(\X)$ the base measure and $\epsilon>0$ the temperature. There are several reasons that motivate the study of the log partition function: for instance, it is used as a smooth approximation of the maximum, and for computational purposes in optimization; moreover, it is used in probabilistic inference~\cite{wainwright2008graphical}. Our goal is to understand how to compute~\eqref{eq:logsumexp}. Recall, the definition of KL-divergence. 
\begin{definition}[Kullback-Leibler (KL) divergence]
Let $p,q $ be in $\cP^1(\cX)$ (i.e. probability measures on a measurable space $\cX $), such that $p$ is absolutely continuous with respect to $q$. Then, the KL-divergence between p and q is defined as
\begin{align}
    D(p \| q): = \int_{\cX} \log \left( \frac{dp(x)}{dq(x)}  \right) dp(x).
\end{align}
\end{definition}
Note that~\eqref{eq:logsumexp} can be rewritten as
\begin{align}
 \varepsilon \log \int_\X \exp \left( \frac{h(x)}{\varepsilon}  \right) d q(x)
& =  \sup_{p \in \cP^1(\cX)} \int_\X h(x) dp(x) -  \varepsilon\int_\X \log \frac{dp(x) }{dq (x)} dp(x)
\\
& =  \sup_{p \in \cP^1(\cX)} \int_\X h(x) dp(x) -  \varepsilon D(p\|q).  \label{eq:equivlogsumexp}
\end{align}
The first equality is a classical result of convex duality between maximum likelihood and relative entropy. One can verify it by deriving the right-hand side. Let us look at the two terms in~\eqref{eq:equivlogsumexp}. If $h$ is represented as $h(x) := \phi(x)^T H \phi(x) $, then the term $\int_{\cX} h(x) dp(x) $ will be expressed in terms of moment matrices. We therefore ask the following question for the second term: can we upper or lower bound $D(p \| q) $ in terms of the moments of $\phi(x)$, i.e., in terms of 
\begin{align}
    \Sigma_p = \int_{\cX} \phi(x)\phi(x)^Tdp(x) \qquad \text{and} \qquad  \Sigma_q = \int_{\cX} \phi(x) \phi(x)^T dq(x) \quad ? \label{eq:sigmap_sigmaq}
\end{align}
This would lead to a convex optimization problem on the set of moment matrices, which we can relax using sum-of-squares and solve efficiently. A first attempt to answer this question appears in~\cite{bach2002kernel}, and consists of considering $\Sigma_p$ and $\Sigma_q$ to be covariances of Gaussian distributions $p$ and $q$ of dimension $d$. The KL divergence for Gaussian distributions then reads
\begin{align}
- \frac{1}{2} \log \det ( \Sigma_p \Sigma_q^{-1}) + \frac{1}{2}  \tr \Sigma_p \Sigma_q^{-1} - \frac{d}{2}.
\end{align}
This has some nice properties: in fact it is zero if and only if $\Sigma_p = \Sigma_q$, and potentially there is a link with information theory. However, some other properties of $D(p\| q)$ are not preserved: it is no longer jointly convex in $p$ and $q$, it diverges in infinite dimensions and there is no direct link with the true KL divergence. A more powerful approach uses the kernel KL divergence.

\subsection{Kernel KL divergence}
\begin{definition}[Kernel KL divergence, or Von Neumann divergence~\cite{bach2022information}] Let $p$ and $q$ be in $\cP^1(\cX)$, and let $\Sigma_p$ and $\Sigma_q$ be defined as in~\eqref{eq:sigmap_sigmaq} for some feature map $\phi: \cX \to \cF$. We define the kernel KL divergence (or Von Neumann divergence) as
\begin{align}
D(\Sigma_p \| \Sigma_q) = \tr \big[ \Sigma_p (\log \Sigma_p - \log \Sigma_q) \big].
\end{align}
\end{definition}
We will assume that $\phi: \cX \to \cF$ is obtained from a positive definite kernel. We focus mostly on $\bR^d$, however, we remark that one can leverage this technique to any structured objects beyond finite sets and $\bR^d$. The kernel KL divergence has several nice properties, that we list here.
\begin{proposition}[Properties of kernel KL divergence.] \label{prop:propertiesentropy}
$D(\Sigma_p \| \Sigma_q)$ satisfies the following properties:
\begin{enumerate}
\item $D(\Sigma_p \| \Sigma_q)$ is jointly convex in $\Sigma_p$ and $\Sigma_q$. Because $\Sigma_p$ and $\Sigma_q$ are linear in $p$ and $q$, $D(\Sigma_p \| \Sigma_q)$ is jointly convex in $p$ and $q$;
\item $D(\Sigma_p \| \Sigma_q)\geq 0$ and $D(\Sigma_p \| \Sigma_q)= 0$ if $p=q$;
\item If the kernel that generates $\phi$ is universal, then $D(\Sigma_p \| \Sigma_q)= 0$ if and only if $p=q$;
\item If $ p$ is absolutely continuous with respect to $q$, with $ \| \frac{dp}{dq}\|_{\infty} \leq \alpha $, then  $D(\Sigma_p \| \Sigma_q) \leq\log \alpha \cdot \Tr \Sigma_p$.
\end{enumerate}
\end{proposition}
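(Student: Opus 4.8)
The plan is to recognize $D(\Sigma_p\|\Sigma_q)=\tr[\Sigma_p(\log\Sigma_p-\log\Sigma_q)]$ as the Umegaki (quantum) relative entropy between the trace-class operators $\Sigma_p,\Sigma_q$, and to extract all four items from standard operator inequalities. Throughout I assume the kernel is normalized so that $k(x,x)=1$ (as for the Gaussian and Laplace kernels discussed above); then $\tr\Sigma_p=\int_\cX k(x,x)\,dp(x)=1=\tr\Sigma_q$, a normalization I will need for the non-negativity and faithfulness statements. I would treat the properties in order of increasing reliance on the one deep input. For property 1, that ingredient is Lieb's concavity theorem: for $s\in[0,1]$ the map $(A,B)\mapsto\tr[K^\ast A^s K B^{1-s}]$ is jointly concave on pairs of positive operators. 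Joint convexity of the relative entropy in $(\Sigma_p,\Sigma_q)$ follows from it by the classical differentiation-in-$s$ argument (Lindblad). Once joint convexity in the operator pair is in hand, I note that $\Sigma_p=\int_\cX\phi\phi^\ast\,dp$ is a \emph{linear} function of $p$ (and likewise for $q$); composing a jointly convex function with a linear map preserves convexity, giving joint convexity of $D$ in $(p,q)$.

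For property 2, that $D=0$ when $p=q$ is immediate, since then $\Sigma_p=\Sigma_q$ and the bracket vanishes. Non-negativity is Klein's inequality for $f(t)=t\log t$: the estimate $\tr[f(\Sigma_p)-f(\Sigma_q)-(\Sigma_p-\Sigma_q)f'(\Sigma_q)]\geq 0$ rearranges to $D(\Sigma_p\|\Sigma_q)\geq\tr\Sigma_p-\tr\Sigma_q$, which is $0$ by the normalization. For property 3 the ``if'' direction is property 2; for the converse I would use that the Klein/Bregman inequality is \emph{faithful}, so $D=0$ forces $\Sigma_p=\Sigma_q$. Reading off matrix elements and using $\langle f,\phi(x)\rangle=f(x)$, this means $\int_\cX f(x)^2\,dp(x)=\int_\cX f(x)^2\,dq(x)$ for every $f\in\cF$. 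Universality makes $\cF$ dense in $C(\cX)$, so for any non-negative $h\in C(\cX)$ one approximates $\sqrt{h}$ uniformly by some $f\in\cF$ and obtains $\int h\,dp=\int h\,dq$; since every continuous function is a difference of non-negative ones, $p$ and $q$ integrate all of $C(\cX)$ identically, hence $p=q$.

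For property 4, write $r=dp/dq$ with $0\le r\le\alpha$. For every $f\in\cF$ we have $\langle f,\Sigma_p f\rangle=\int_\cX f^2 r\,dq\le\alpha\int_\cX f^2\,dq=\alpha\langle f,\Sigma_q f\rangle$, so $\Sigma_p\preccurlyeq\alpha\Sigma_q$. Operator monotonicity of the logarithm then yields $\log\Sigma_p\preccurlyeq\log(\alpha\Sigma_q)=\log\alpha\cdot I+\log\Sigma_q$, that is $\log\Sigma_p-\log\Sigma_q\preccurlyeq\log\alpha\cdot I$. Since $\Sigma_p\succcurlyeq 0$ and $\log\alpha\cdot I-(\log\Sigma_p-\log\Sigma_q)\succcurlyeq 0$, the product of these two positive operators has non-negative trace, which rearranges exactly to $D(\Sigma_p\|\Sigma_q)\le\log\alpha\cdot\tr\Sigma_p$.

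The main obstacle is property 1: joint convexity genuinely requires Lieb's concavity theorem (equivalently the concavity of the Wigner--Yanase--Dyson information), the single non-elementary ingredient on which everything else rests. A secondary technical point, which I would handle with care but not belabor, is that in the infinite-dimensional RKHS setting $\Sigma_p,\Sigma_q$ are compact trace-class operators whose eigenvalues accumulate at $0$, so $\log\Sigma_p$ is unbounded; one must therefore restrict to pairs for which the relevant traces are finite (for instance as guaranteed by the absolute-continuity hypothesis of property 4) and interpret the operator differences on the appropriate ranges.
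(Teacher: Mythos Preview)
Your proposal is correct and matches the paper's approach on all four items: the paper likewise defers property~1 to a reference (joint convexity of the quantum relative entropy, i.e., the Lieb--Lindblad result you name), identifies property~2 as the Bregman/Klein inequality for $\tr[\Sigma\log\Sigma]$, obtains property~3 from injectivity of $p\mapsto\Sigma_p$ under a universal kernel, and proves property~4 via exactly your operator-monotonicity-of-$\log$ step from $\Sigma_p\preccurlyeq\alpha\Sigma_q$. Your explicit remarks on the normalization $k(x,x)=1$ (needed so that $\tr\Sigma_p=\tr\Sigma_q$ and hence $D\ge 0$) and on the unboundedness of $\log\Sigma$ in the infinite-dimensional RKHS setting are accurate caveats that the paper leaves implicit.
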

\begin{proof} 
\begin{enumerate}
\item is not trivial, and we refer to the appendices in~\cite{bach2022information} for a formal proof. 
\item holds since $D(\Sigma_p \| \Sigma_q)$ is the Bregman divergence of $\Tr(\Sigma \cdot \log  \Sigma)$ (see e.g.~\cite{amari2016information} for definition and applications of Bregman divergence).
\item follows from the injectivity of the map $p \mapsto \Sigma_p$ when the kernel is universal. In fact, if $\Sigma_p =\Sigma_q$, the universality of the kernel implies that for all continuous functions $f$, $\int_{\cX} f(x) [dp(x) -dq(x)] = 0$, hence $p=q$ and the map $p \mapsto \Sigma_p$ is injective.
\item Assume without loss of generality that $\alpha \geq 1$. We have then $\Sigma_p \preccurlyeq \alpha \Sigma_q $, which leads to
\begin{align}
D(\Sigma_p \|\Sigma_q) =\Tr \big[ \Sigma_p (\log \Sigma_p  \log \Sigma_q) \big] \leq \Tr \big[ \Sigma_p (\log( \alpha\Sigma_q) - \log \Sigma_q) \big] =  \log \alpha \cdot \Tr \Sigma_p.
\end{align}
\end{enumerate}
\end{proof}
\paragraph*{Estimation from data.}
We may ask whether $D(\Sigma_p \| \Sigma_q ) $ can be estimated efficiently from data. To this purpose, assume that $\Sigma_q$ is known and that we observe $x_1,...,x_n$ iid samples from $p$. One can define the empirical moment $\hat \Sigma_p = \frac{1}{n} \sum_{i=1}^n \phi(x_i)\phi(x_i)^T$, and the plug-in estimator $D(\hat \Sigma_p \|\Sigma_q) $. It turns out (Proposition 7 in~\cite{bach2022information}) that with further conditions on the decay of the eigenvalues of the kernel,
\begin{align}
D(\hat \Sigma_p \|\Sigma_q) - D(\Sigma_p \|\Sigma_q) = O\left(\frac{\log n}{\sqrt{n}}\right ).
\end{align}
%\bl{GP: Io nelle note avevo $<1/\sqrt{n}$, però nel paper guardano $D(\hat \Sigma_p \|\hat \Sigma_q) - D(\Sigma_p \|\Sigma_q)$ e gli viene il $\log$, non so quale sia giusto.} {\color{purple} EC: anch'io nelle note ho $1/\sqrt{n}$, ma a me sembra che il log ci sia. Io penso che scriva $O(1/\sqrt{n})$ ignorando eventuali log(n), come ho gia visto in altri paper. Alla fine per la complessita ti interessa l'esponente di n. Posso scrivere $\tilde O(1/\sqrt{n})$, $\tilde O$ dovrebbe essere la notazione standard che ignora i termini logaritmici.. }
We remark that, surprisingly, no extra regularization to the plug-in estimator is required for achieving the above rate.  \\
\\
It is interesting to compare $D(\Sigma_p \| \Sigma_q)$ with the Maximum Mean Discrepancy (MMD) between $p$ and $q$, defined as follows~\cite{gretton2012kernel}: 
\begin{align}
{\rm MMD}(p,q) : = \| \mu_p - \mu_q\|,
\end{align}
where $\mu_p = \int_{\cX} \phi(x) dp(x) $, and similarly $\mu_q = \int_{\cX} \phi(x) dq(x)$, and $\|.\|$ denotes the kernel norm of induced by $\phi(.)$ (see~\eqref{eq:kernel_norm}). Indeed, all properties of $D(\Sigma_p\| \Sigma_q) $ in Proposition~\ref{prop:propertiesentropy} and estimation in $O(\log n/\sqrt{n})$ rate hold for MMD as well. The main difference between $D(\Sigma_p \| \Sigma_q) $ and MMD, is that MMD has no link with the classical notions of divergences from information theory, whereas $D(\Sigma_p \| \Sigma_q) $ has a direct link with the KL divergence, which we analyse in the following Section.

\subsection{Link with KL divergence} \label{sec:linkwithentropy}
If we assume that $\cX$ is a finite set with orthonormal embeddings (i.e. such that $\langle \phi(x), \phi(y) \rangle = 1_{x=y}$) and that all covariance operators are jointly diagonalizable with probability mass values as eigenvalues, then we recover the KL divergence \emph{exactly}:
\begin{align}
D(\Sigma_p \| \Sigma_q) & = \Tr \left[ \Sigma_p \left( \log\Sigma_p - \log 
 \Sigma_q  \right) \right] \\
 & \overset{(a)}{=} \sum_{x \in \cX }  p(x) \left( \log p(x) - \log q(x) \right) = D(p \|q),
% & = D \Big( \sum_{x \in \cX} \phi(x)  \phi(x)^T p(x) \Big\|
% \sum_\X \frac{q(x)}{p(x)}\phi(x)   \phi(x)^T p(x)\Big) \\
% &= \sum_{x \in \X} p(x) \log \frac{p(x)}{q(x)} = D(p\|q).
\end{align}
where $(a)$ holds because by assumption both $\Sigma_p$ and $\Sigma_q$ are diagonal, with the probability mass values as diagonal elements. 

However, beyond finite sets we cannot recover the KL divergence exactly. If we assume that for all $x \in \cX$, $\| \phi(x) \| \leq 1$, then 
% The kernel relative entropy can then be rewritten as
% If we further assume that $\cX$ is a finite set with orthonormal embedding, i.e. such that 
% $\langle \phi(x), \phi(y) \rangle = 1_{x=y}$, and that all covariance operators are jointly diagonalizable with probability mass values as eigenvalues, then we recover the regular entropy \emph{exactly}:
% \begin{align}
% D(\Sigma_p \| \Sigma_q) = \sum_{x \in \X} p(x) \log \frac{p(x)}{q(x)} = D(p\|q).
% \end{align}
% On the other hand, beyond finite sets 
we can find the following lower bound on KL divergence:
\begin{align}
D(\Sigma_p \| \Sigma_q)
& =  D \Big( \int_\X \phi(x)  \phi(x)^T dp(x) \Big\|
\int_\X \frac{dq(x)}{dp(x)}\phi(x)   \phi(x)^T dp(x) \Big)\\
& \overset{(a)}{\leq} \int_\X
D \Big(
\phi(x)  \phi(x)^T \Big\|
 \frac{dq(x)}{dp(x)} \phi(x)   \phi(x)^T
\Big)
dp(x)
\\
& \overset{(b)}{=}  \int_\X
\| \phi(x)\|^2 D \Big(1  \Big\|
 \frac{dq(x)}{dp(x)}
\Big)
dp(x) \\
&  \overset{(c)}{\leq} \int_\X \log \Big( \frac{ dp}{dq}(x) \Big) dp(x) =  D(p\|q), \label{eq:entropybound}
\end{align}
where $(a)$ follows by the \emph{joint} convexity of $D(\cdot\|\cdot)$ (note that both integrals in the arguments are in dp(x)) and by Jensen's inequality, (b) holds because $\phi(x)\phi(x)^T$ and $\phi(x)\phi(x)^T \frac{dq(x)}{dp(x)} $ are rank $1$ operators with eigenvectors proportional to $\phi(x)$ and eigenvalues equal to either zero or $\|\phi(x) \|^2$ (for $\phi(x)\phi(x)^T$) and to either zero or $\|\phi(x) \|^2  \frac{dq(x)}{dp(x)}$  (for $\phi(x)\phi(x)^T \frac{dq(x)}{dp(x)}$), and $(c) $ follows by the assumption that $\|\phi(x)\| \leq 1$ and by the definition of KL divergence. 
% Thus, the kernel entropy is a lower bound to the KL divergence, with the only assumption that features are bounded.

We may ask whether the above inequality is tight. One way to estimate the tightness of~\eqref{eq:entropybound} uses quantum measurements. We outline here the main ideas, and refer to Section 4.1 in~\cite{bach2022information} for details.
% In fact, when measuring we reduce the entropy. 

For all $y \in \cX$, a \emph{quantum measurement} for $p$ is defined as
\begin{align}
\tilde p(y) = \Tr \left( \Sigma_p D(y) \right),
\end{align}
where $D(y)$ is called the \emph{measurement operator}, and it is such that $ D(y) \succcurlyeq 0 $ and $ \int_{\cX } D(y) d \tau (y) = I$, with $\tau$ being the uniform measure over $\cX$. One can define the analogous quantum measurement for $q$ as
\begin{align}
\tilde q(y) = \Tr \left( \Sigma_q D(y) \right).
\end{align}
By data processing inequality (see Appendix A.2 of~\cite{bach2022information}), we have 
\begin{align}
D(\tilde p \| \tilde q ) \leq D( \Sigma_p \| \Sigma_q ).
\end{align}
One can then choose $D(.)$ such that $\tilde p $ and $\tilde q$ are smooth versions of $p$ and $q$. Specifically, for all $y \in \cX$ let 
\begin{align}
D(y) = \Sigma^{-1/2} ( \phi(y) \phi(y)^T)  \Sigma^{-1/2},
\end{align}
which satisfies the above properties. We then have 
\begin{align}
\tilde p(y) = \Tr\left[ D(y) \Sigma_p \right] = \int_\cX \langle \phi(x), \Sigma^{-1/2}\phi(y)   \rangle^2 dp(x) = \int_\cX h(x,y) dp(x),
\end{align}
where $h(x,y):= \langle \phi(x), \Sigma^{-1/2}\phi(y)   \rangle^2 $ is such that $\int_\cX h(x,y) d\tau(x) =1$ and can be seen as a smoothing kernel.

Overall, we get the following two sided inequality:
\begin{align}
D( \tilde p \| \tilde q) \leq D( \Sigma_p \| \Sigma_q) \leq D(  p \|  q), 
\end{align}
that can lead to quantitative bounds between $ D( \Sigma_p \| \Sigma_q) $ and $D(  p \| q)$, specifically when the smoothing function $h$ puts most of its mass on pairs $(x,y)$ where $x $ is close to $y$. For instance, one can see that if $h(x,y) = \exp\left( -\frac{ \|x-y\|_2 }{\sigma} \right)$, then $D(p\|q) - D( \Sigma_p \| \Sigma_q) = O(\sigma^2) $ (see Section 4.2 in~\cite{bach2022information}). However, note that the smaller the $\sigma$ is, the larger the sample will need to be to estimate the kernel KL divergence.

Let us now look at a simple example.
\paragraph*{Example.} Let $\cX = [0,1]$ and $\phi(x) = \exp\left( 2i\omega \pi x \right) \hat q(\omega)^{1/2}$, where $\hat q(\omega)$ is the Fourier transform of a kernel. Then, 
\begin{align}
\phi(x) \phi(y)^* = \sum_{\omega \in \bZ} \hat q(\omega) \exp\left( 2 i \omega (x-y) \right) = q(x-y),
\end{align}
that is the translation-invariant kernel on the torus. $\Sigma_p$ is then an infinite dimensional matrix with elements given by
\begin{align}
   \forall \omega,\omega' \in \bZ, \qquad  (\Sigma_p)_{\omega \omega'} &= \int_{x \in \cX} dp(x) \exp \left( 2i \pi x(\omega - \omega') \right) \hat  q(\omega)^{1/2} \hat q(\omega')^{1/2}\\
    & \overset{(a)}{=} \hat p(\omega - \omega')\hat  q(\omega)^{1/2} \hat q(\omega')^{1/2},
\end{align}
where $(a)$ holds by definition of characteristic function of $p$. Thus, 
\begin{align}
\Sigma_p = {\rm diag}(\hat q)^{1/2} {\rm TM}(\hat p) {\rm diag}(\hat q)^{1/2},
\end{align}
where we denoted by $ {\rm TM}(\hat p) $ the Toeplitz matrix whose elements are given by
\begin{align}
    ({\rm TM}(\hat p))_{\omega,\omega'} = \hat p(\omega - \omega').
\end{align}
The previous discussion implies that the quantity $\tr \Sigma_p \log \Sigma_p$ is related to entropy. 

\subsection{Estimation of log-partition function}
Recall, the definition of log-sum-exp entropy and its equivalent formulation in~\eqref{eq:equivlogsumexp}: 
\begin{align}
  \varepsilon  \log \int_\X e^{h(x)/ \varepsilon} dq(x) &= \sup_{ p \in \cP^1(\cX)} \int_\cX h(x) dp(x) - \varepsilon D( p \| q)
\end{align}
Assuming $\|\phi(x)\|\leq 1$ for all $x \in \cX$, the discussion of Section~\ref{sec:linkwithentropy} allows us to upper-bound the above by 
\begin{align} \label{eq:sumexpfirstrelax}
    \varepsilon  \log \int_\X e^{h(x)/ \varepsilon} dq(x) \leq  \sup_{ p \in \cP^1(\cX)} \int_\cX h(x) dp(x) - \varepsilon D( \Sigma_p \| \Sigma_q).
\end{align}
Moreover, if $h$ is representable, i.e. if $h = \langle \phi(x), H \phi(x) \rangle $ for some $H$, then 
\begin{align}
\int_\cX h(x) dp(x) &= \tr \left( H \int_{x\in \cX} p(x) \phi(x) \phi(x)^T \right)\\
& = \tr\left(H \Sigma_p \right),
\end{align}
and we can further relax~\eqref{eq:sumexpfirstrelax}, with standard SOS relaxation seen before, by replacing the $\sup_p$ by the $\sup_{\Sigma_p \in \hat \cK}$, where we recall 
\begin{align}
    \hat \cK = \{ \Sigma \in {\rm span}\{ \phi(x) \phi(x)^T : x \in \cX   \} : \Sigma \succcurlyeq 0, \tr[U\Sigma]=1\}  . 
\end{align}
Overall, we obtain 
\begin{align} \label{eq:entropyfinalbound}
\varepsilon  \log \int_\X e^{h(x)/ \varepsilon} dq(x) \leq  \sup_{\Sigma_p \in \hat \cK} \tr[H \Sigma_p] - \varepsilon D(\Sigma_p \| \Sigma_q).
\end{align}
This can be computed in polynomial time by semi-definite programming. We remark that adding the relative entropy as a regularizer is a standard technique used to solve the SDP problem~\eqref{eq:discrete_ineq}. Here we show that this technique has also an information-theoretic meaning.
% We remark one `fun' property: $D(\Sigma_p \|\Sigma_q) $ is concave in the kernel. This implies that it can be maximized to obtain better bounds.

\subsection{Extensions}
We highlight the following extensions to the contents presented in this lecture.
\begin{enumerate}
    \item For two probability distributions $p$ and $q$ such that $p \succcurlyeq q$, for a convex function $f$ such that $f(x)$ is finite for all $x>0$ and $f(1)=0$, we can define the \emph{f-divergence} as
    \begin{align}
    D_f (p\| q) = \int_{\cX} f\left( \frac{dp(x)}{dq(x)} \right) dq(x).
    \end{align}
    This definition includes several well-known divergences, e.g. KL, Pearson, Hellinger, $\chi^2$. We note that all the properties discussed above apply to any well defined f-divergence. 
    \item $D(\Sigma_p \|\Sigma_q) $ is concave in the kernel, which means that if we replace for instance $\phi(x) $ by $\Lambda^{1/2}\phi(x)$, with $\Lambda \succcurlyeq 0$, and consider the kernel $K_{\Lambda} (x,y) = \phi(x)^T \Lambda \phi(y)$, then $D(\Sigma_p \|\Sigma_q) $ is concave in $\Lambda$. This property implies that the kernel can be optimized to obtain better bounds.
    \item One can use other notions of quantum divergences, which lead to better bounds. E.g.,~\cite{matsumoto2015new} showed that 
    \begin{align}
    \tr\left[ A ( \log A - \log B)\right]\leq  \tr \left[ A \log(B^{-1/2}A B^{-1/2})\right],
    \end{align}
    which implies that using the right hand side as divergence gives an improvement in the bounds in~\eqref{eq:entropybound} and~\eqref{eq:entropyfinalbound}.
    \item We showed that $D(p\|q) \geq D(A\|B)$ if $A = \Sigma_p$ and $B=\Sigma_q$. We may ask what is the best lower bound on $D(p\|q) $ based on $\Sigma_p$ and $\Sigma_q$:
    \begin{align}
        D^*(A \| B):= \inf_{p, q \in \cP^1(\cX) }\  D(p\|q) \ \mbox{ such that } \ \Sigma_p = A  \mbox{ and } \Sigma_q = B.
    \end{align}
    Indeed, $D^*(A\|B)$ is computable by sum-of-squares relaxation (see~\cite{bach2022sum} for details).
\end{enumerate}
\section{Acknowledgements}
These are notes from the lecture of Prof. Francis Bach given at the summer school \textquotedblleft Statistical Physics \& Machine Learning\textquotedblright, that took place in Les Houches School of Physics in France from 4th to 29th July 2022. The school was organized by Prof. Florent Krzakala and Prof. Lenka Zdeborová from EPFL.
\newpage

\nocite{*}

\bibliography{apssamp}% Produces the bibliography via BibTeX.

\end{document}